\documentclass[reqno]{amsart}
 %Format: latex
\usepackage{epsfig,latexsym,amsfonts,amssymb,amsmath,amscd}
\usepackage{amsthm}

\usepackage{tikz-cd}
\usepackage[mathscr]{eucal}
\usepackage{mathrsfs}
\usepackage{mathbbol}
\usepackage{array}
\usetikzlibrary{matrix,arrows,decorations.pathmorphing}
\usepackage{oldgerm,units,color}

\usepackage{pst-node}
\usepackage{graphicx}
% \usepackage{auto-pst-pdf}
%\usepackage{xypic}
%\input xy
%\xyoption{all}

\usepackage{eepic, epic}

\usepackage{ifthen}

\def\tropa{  {\operatorname{trop}}\, }

\def\Gz{ {\Gamma}_{\zero}}

\def\ssemiring0{$s$-semiring$^\dagger$}

%\marginpar{*}
\usepackage{tikz}
%

%%%%%%%%%%%%%%%%%%%%%%% PACKAGES  %%%%%%%%%%%%%%%%%%%%%%%%%%%%%%%%
\usepackage{epsfig,latexsym,amsfonts,amssymb,amsmath,amscd,graphics,epic}
\usepackage{amsfonts,amssymb,amsmath,amscd,amsthm}
\usepackage[mathscr]{eucal}
\usepackage{mathrsfs}

\usepackage{mathbbol}

\usepackage{oldgerm,units}
\usepackage{wrapfig,epsfig}

\usepackage{ifthen}
\usepackage{mathbbol}

\usepackage{amsthm}
\usepackage[mathscr]{eucal}
\usepackage{mathrsfs}
\usepackage{mathbbol}
\usepackage{oldgerm,units}
\usepackage{wrapfig}

%\input ../macro/diagram.tex

%%%%%%%%%%%%%%%%%%%%%%% PAGE SETUP %%%%%%%%%%%%%%%%%%%%%%%%%%%%%%%%

%==>\setlength{\oddsidemargin}{0.3in}
%%\setlength{\evensidemargin}{0in}
%==>\setlength{\topmargin}{0in}
%%\setlength{\headheight}{0in}
%%\setlength{\headsep}{0in}
%%\setlength{\footskip}{2pc}
%===>\setlength{\textwidth}{5.8in}
%%
% ==> \setlength{\textheight}{8.35in}
%%
%%\def\topfraction{0.99}
%%\def\textfraction{0.05}
%%\def\floatpagefraction{0.9}
%%\def\bottomfraction{0.99}
%%\def\dbltopfraction{0.99}
%%\def\dblfloatpagefraction{0.8}
%%
%%
%%\setlength{\floatsep}{1pc}
%%\setlength{\textfloatsep}{1pc}
%%\setlength{\dblfloatsep}{\floatsep}
%%\setlength{\dbltextfloatsep}{\textfloatsep}

%%%%%%%%%%%%%%%%%%%%%%% THEOREMS %%%%%%%%%%%%%%%%%%%%%%%%%%%%%%%%

\newtheorem{theorem}{Theorem}[section]

\newtheorem{example}[theorem]{Example}

%\newtheorem{digression}[theorem]{Digression}

%%%%%%%%%%%%%%%%%%%%%%% REFERENCE %%%%%%%%%%%%%%%%%%%%%%%%%%%%%%%%

%%%%%%%%%%%%%%%%%%%%%%% SETS %%%%%%%%%%%%%%%%%%%%%%%%%%%%%%%%

%%%%%%%%%%%%%%%%%%%%%%% GRUOPES %%%%%%%%%%%%%%%%%%%%%%%%%%%%%%%%

\newcommand{\one}{\mathbb{1}}
\newcommand{\zero}{\mathbb{0}}

%%%%%%%%%%%%%%%%%%%%%%% ALGEBRA %%%%%%%%%%%%%%%%%%%%%%%%%%%%%%%%

%%%%%%%%%%%%%%%%%%%%%%% SUB SETS %%%%%%%%%%%%%%%%%%%%%%%%%%%%%%%%

%%%%%%%%%%%%%%%%%%%%%%% COMBINATORICS %%%%%%%%%%%%%%%%%%%%%%%%%%%%%%%%

%%%%%%%%%%%%%%%%%%%%%%% TROP SETS %%%%%%%%%%%%%%%%%%%%%%%%%%%%%%%%

\newcommand{\Trop}{\mathbb T}

%%%%%%%%%%%%%%%%%%%%%%% GENERLIZED TROP %%%%%%%%%%%%%%%%%%%%%%%%%%%%%%%%

%\newcommand{\iReal}[1]{^{#1} \Real}

%%%%%%%%%%%%%%%%%%%%%%% GOTIC LETTERS %%%%%%%%%%%%%%%%%%%%%%%%%%%%%%%%

%%%%%%%%%%%%%%%%%%%%%%% TROP LETTERS %%%%%%%%%%%%%%%%%%%%%%%%%%%%%%%%

\newcommand{\tG}{\trop{G}}

\newcommand{\tT}{\trop{T}}

%%%%%%%%%%%%%%%%%%%%%%% SIGNS %%%%%%%%%%%%%%%%%%%%%%%%%%%%%%%%

%\newcommand{\to}{\rightarrow }

%\newcommand{\iff}{\thSpc \Longleftrightarrow \thSpc}

%%%%%%%%%%%%%%%%%%%%%%% TROP SETS %%%%%%%%%%%%%%%%%%%%%%%%%%%%%%%%

%%%%%%%%%%%%%%%%%%%%%%% TROP RELATION %%%%%%%%%%%%%%%%%%%%%%%%%%%%%%

%%%%%%%%%%%%%%%%%%%%%%% TROP SIGNS %%%%%%%%%%%%%%%%%%%%%%%%%%%%%%%%

% {#1^{\bigtriangledown}}
% {#1^{\bigtriangledown}}

%%%%%%%%%%%%%%%%%%%%%%% TROP DUALITY %%%%%%%%%%%%%%%%%%%%%%%%%%%%%%%%

%%%%%%%%%%%%%%%%%%%%%%% GREEK LETTERS %%%%%%%%%%%%%%%%%%%%%%%%%%%%%%%%

%\newcommand{\dl}{\delta}

%%%%%%%%%%%%%%%%%%%%%%% PERMUTATION %%%%%%%%%%%%%%%%%%%%%%%%%%%%%%%%

%%%%%%%%%%%%%%%%%%%%%%% TROP MATRICES %%%%%%%%%%%%%%%%%%%%%%%%%%%%%%%%

%\newcommand{\ptA}{\prmA{\tA}}

%%%%%%%%%%%%%%%%%%%%%%% NOTATIONS %%%%%%%%%%%%%%%%%%%%%%%%%%%%%%%%

%%%%%%%%%%%%%%%%%%%%%%% NOTATIONS %%%%%%%%%%%%%%%%%%%%%%%%%%%%%%%%

%\newcommand{\tIdA}{\Id'_{_\tT}}
%\newcommand{\tIdB}{\Id''_{_\tT}}

%\newcommand{\tIdA}{\al}
%\newcommand{\tIdB}{\bt}

%\newcommand{\ptId}{p_{_\tT}}

%%%%%%%%%%%%%%%%%%%%%%% TROPICAL OPERATORS %%%%%%%%%%%%%%%%%%%%%%%%%%%%%%%%

%\newcommand{\Dim}{Dim \:}

%%%%%%%%%%%%%%%%%%%%%%% SHORTINGS %%%%%%%%%%%%%%%%%%%%%%%%%%%%%%%%

%%%%%%%%%%%%%%%%%%%%%%% LETTER SHORT %%%%%%%%%%%%%%%%%%%%%%%%%%%%%%%%

%%%%%%%%%%%%%%%%%%%%%%% GRAPH THEORY %%%%%%%%%%%%%%%%%%%%%%%%%%%%%%%%

%%%%%%%%%%%%%%%%%%%%%%% POLYTOPES %%%%%%%%%%%%%%%%%%%%%%%%%%%%%%%%
%%%%%%%%%%%%%%%%%%%%%%% POLYTOPES %%%%%%%%%%%%%%%%%%%%%%%%%%%%%%%%

%%%%%%%%%%%%%%%%%%%%%%% POLYTOPES %%%%%%%%%%%%%%%%%%%%%%%%%%%%%%%%
%%%%%%%%%%%%%%%%%%%%%%% POLYTOPES %%%%%%%%%%%%%%%%%%%%%%%%%%%%%%%%

\hfuzz5pt % Don't bother to report over-full boxes < 5pt
\vfuzz5pt % Don't bother to report over-full boxes < 5pt

\pagestyle{empty}
    \ifx\proof\undefined
    \newenvironment{proof}{
    \smallskip
    \noindent\emph{Proof.}}{\hfill\(\Box\)
    \bigskip
    } \fi

%%%%%%%%%%%%%%%%%%%%%%% MATRIXES %%%%%%%%%%%%%%%%%%%%%%%%%%%%%%%%

%%%%%%%%%%%%%%%%%%%%%%% GREEN %%%%%%%%%%%%%%%%%%%%%%%%%%%%%%%%

%%%%%%%%%%%%%%%%%%%%%%% VECTORS %%%%%%%%%%%%%%%%%%%%%%%%%%%%%%%%

%%%%%%%%%%%%%%%%%%%%%%% VECTORS SPACES %%%%%%%%%%%%%%%%%%%%%%

%%%%%%%%%%%%%%%%%%%%%%% LIMES %%%%%%%%%%%%%%%%%%%%%%%%%%%%%%%%

%%%%%%%%%%%%%%%%%%%%%%%  emphasize %%%%%%%%%%%%%%%%%%%%%%%%%%%%%%%%

%%%%%%%%%%%%%%%%%%%%%%% FLAGES  %%%%%%%%%%%%%%%%%%%%%%%%%%%%%%%%

%\ifthenelse{\equal{\toPrint}{part1}}

\newcommand{\ifdef}[3]{\ifthenelse{\equal{#1}{true}}{#2}{#3}}

%\newboolean{PrintEquation}
%\setboolean{PrintEquation}{true}

%%%%%%%%%%%%%%%%%%%%%%% SINGULARITY %%%%%%%%%%%%%%%%%%%%%%%%%%%%%%%%

%%%%%%%%%%%%%%%%%%%%%%% ABELIAN %%%%%%%%%%%%%%%%%%%%%%%%%%%%%%%%

%%%%%%%%%%%%%%%%%%%%%%% SPACING %%%%%%%%%%%%%%%%%%%%%%%%%%%%%%%%

%\newcommand{\npr}{\\ \indent}

%\def\baselinestretch{1.4}

%%%%%%%%%%%%%%%%%%%%%%% Nomeration %%%%%%%%%%%%%%%%%%%%%%%%%%%%%%%%
\pagenumbering{arabic} \pagestyle{plain}
%\numberwithin{equation}{subsection}
%\numberwithin{equation}{section}
%\setcounter{section}{-1}

\pagestyle{headings}

\textwidth 160mm \textheight 228mm \topmargin -5mm \evensidemargin
0mm \oddsidemargin 0mm

\definecolor{lgray}{gray}{0.90}

\def\ctw{\cdot_{\operatorname{tw}}}

\def\val{\operatorname{val}}

\def\bw{\bigwedge}

\def\({\left(}
\def\){\right)}

\def\strop{  {\operatorname{strop}}\, }
\def\Trop{  {\operatorname{Trop}}}

\def\pipe{{\underset{{\ \, }}{\mid}}}

\def\vsemifield0{$\nu$-semifield$^\dagger$}
\def\vsemiring0{$\nu$-semiring$^\dagger$}

\def\pipe1{{\underset{{1}}{\mid}}}
\def\lmod1{\mathrel  \pipe1  \joinrel \joinrel =}

\def\CFunFF1{\operatorname{CFun} (F,F)}

\def\semiring0{semiring$^{\dagger}$}
\def\Semiring0{Semiring$^{\dagger}$}
\def\Semirings0{Semirings$^{\dagger}$}
\def\semidomain0{semidomain$^{\dagger}$}
\def\semifield0{semifield$^{\dagger}$}
\def\semifields0{semifields$^{\dagger}$}
\def\vsemifields0{$\nu$-semifields$^{\dagger}$}
\def\domain0{domain$^{\dagger}$}
\def\predomain0{pre-domain$^{\dagger}$}
\def\predomains0{pre-domains$^{\dagger}$}
\def\domains0{domains$^{\dagger}$}
\def\vdomains0{$\nu$-domains$^{\dagger}$}

\def\Fun{\operatorname{Fun}}

\def\domains0{domains$^\dagger$}
\def\w{\wedge}
\def\Dz{D\{z\}}

\newcommand{\etype}[1]{\renewcommand{\labelenumi}{(#1{enumi})}}
% \roman, \arabic, \alph
\def\eroman{\etype{\roman}}

\def\pipe{{\underset{{\tG}}{\mid}}}

\def\lmod{\mathrel  \pipe \joinrel \joinrel =}
\def\pipe{{\underset{{\tG}}{\mid}}}

\def\ealph{\etype{\alph}}

\def\tG{\mathcal G}
\def\RGnu{(R,\tG,\nu)}

\def\pSkip{\vskip 1.5mm \noindent}

\def\a{\alpha}
\newtheorem{thm}[theorem]{Theorem}

\newtheorem*{thm*}{Theorem}

\def\Mor{\operatorname{Mor}}

\newtheorem{rem}[theorem]{Remark}
\newtheorem{prop*}{Proposition}

\newtheorem{prop}[theorem]{Proposition}
\newtheorem{defn}[theorem]{Definition}
\newtheorem*{examp*}{Example}
\newtheorem*{examples*}{Examples}
\newtheorem*{remark*}{Remark}
\newtheorem*{defn*}{Definition}

\def\la{\lambda}

\def\tT{\mathcal T}
\def\Fun{\operatorname{Fun}}
\def\tTz{\tT_\zero}

\numberwithin{equation}{section}

\def\M0{M_{\zero}}

\def\supp{\operatorname{supp}}
\def\vsupp{\nu$-$\operatorname{supp}}

\def\tGz{\mathcal G_\zero}

\def\PS{P}

\def\Cong{\Phi}

\def\semirings0{semirings$^\dagger$}

\newcommand{\nPS}[1]{\PS_{(!#1)}}
\newcommand{\nPSo}[1]{\nPS{\one}}

\newcommand{\absl}[1]{|{#1}|}
\newcommand{\adj}[1]{\operatorname{adj}({#1})}

\begin{document}

%******************************* title ***********************************

\title[An informal overview of triples and systems]{An informal overview of triples and systems}

\author[L.~Rowen]{Louis Rowen}
\address{Department of Mathematics, Bar-Ilan University, Ramat-Gan 52900,
Israel} \email{rowen@math.biu.ac.il}

%******************************* AMS classification ***********************
\subjclass[2010]{Primary   08A05,  16Y60, 12K10,  13C60, 20N20;
Secondary 03C05, 06F05, 13C10, 14T05}

%******************************* date *************************************
\date{\today}

%******************************* keywords *********************************

\keywords{bipotent, category,  congruence, dual basis,
  homology, hyperfield, linear algebra, matrix,   meta-tangible,
morphism, negation map, module,      polynomial,  prime, projective,
tensor product, semifield,  semigroup,  semiring, split,
supertropical algebra,  superalgebra, surpassing relation,
symmetrization,
  system,
triple, tropical.}

\thanks{\noindent \underline{\hskip 3cm } \\ File name: \jobname}

%******************************* abstract *********************************

\begin{abstract}

We describe triples and systems, expounded  as an axiomatic
algebraic umbrella theory
 for classical algebra, tropical algebra, hyperfields, and fuzzy rings.
\end{abstract}

\maketitle

%\tableofcontents
%%%%%%%%%%%%%%%%%%%%%%%%%%%%%%%%% section %%%%%%%%%%%%%%%%%%%%%%%%%%%%%

% {\small \tableofcontents}

% \setcounter{tocdepth}{1} {\small \tableofcontents}

%%%%%%%%%%%%%%%%%%%%%%%%%%%%%%%%% section %%%%%%%%%%%%%%%%%%%%%%%%%%%%%

\section{Introduction}
The goal of this overview is to present an axiomatic algebraic
theory which unifies, simplifies, and ``explains'' aspects of
tropical algebra
\cite{zur05TropicalAlgebra,IR,IR1,IzhakianKnebuschRowen2011CategoriesII},
hyperfields \cite{BB,GJL,Ju,Vi}, and fuzzy rings \cite{Dr,DW,GJL} in
terms of familiar algebraic concepts. It was motivated by an attempt
to understand whether or not it is coincidental that basic algebraic
theorems are mirrored in supertropical algebra, and was spurred by
the realization that some of the same results are obtained in
parallel research on hyperfields and fuzzy rings. Our objective is
to hone in on the precise axioms that include these various
examples, formulate the axiomatic structure, describe its uses, and
review five papers \cite{Row16,AGR,JuR1,JuMR1,JuMR2} in which the
theory is developed. The bulk of this survey concerns \cite{Row16},
in which the axiomatic framework is laid out, since the other papers
build on it.

Other treatments can be found in
\cite{CC2,grandis2013homological,Lor1}. Although we deal with
general categorical issues, ours is largely a ``hands on'' approach,
emphasizing a ``negation map'' which exists in all of the
above-mentioned examples, and which often is obtained by means of a
``symmetrization'' functor. The other key ingredient is a
\textbf{surpassing relation}~$\preceq$, to replace equality in our
theorems.  (In classical mathematics, $\preceq$ is just equality.)

%
%\begin{defn}\label{metatanb} The $\tT$-triple $(\mathcal A, \tT, (-))$ is
% \textbf{uniquely negated} if $a + b \in \mathcal
%A^\circ$ for  $a,b \in \tT$ implies $b= (-)a $.
%\end{defn}
%
%By definition, hyperfield triples are uniquely negated. We can hone
%in further, to obtain two of the principal concepts of
%{\cite{Row16}}:

The quadruple $(\mathcal A, \tT, (-),\preceq)$ is called a
$\tT$-\textbf{system}. Although the investigation of systems has
centered on semirings, having grown out of tropical considerations,
it also could be used to develop a parallel Lie semi-algebra theory
(and more generally Hopf semi-algebra theory).

\subsection{Acquaintance with basic notions}$ $

One starts with a set~$\tT$  that we want to study, called the set
of \textbf{tangible
 elements}, endowed with  a partial
additive algebraic structure which however is not defined on all of
$\tT$; this is resolved by embedding $\tT$ in a larger set $\mathcal
A$ with a fuller algebraic structure.  Often $\tT$ is a
multiplicative monoid \footnote{Geometry over monoids has been a
subject of recent interest, cf.~\cite{CHWW}.}, a situation developed
by Lorscheid \cite{Lor1,Lor2} when $\mathcal A$ is
  a semiring. However, there also are examples (such as Lie algebras) lacking
associative multiplication. We usually denote a typical element of
$\tT$ as $a$, and a typical element of $\mathcal A$ as $b$.

\begin{defn}\label{modu13}
A $\tT$-\textbf{module}   over a set $\tT$ \footnote{More generally,
when $\tT$ is not a monoid, Hopf theory could play an appropriate
role.} is an additive monoid $(\mathcal A,+,\zero _\mathcal A)$
together with scalar multiplication $\tT\times \mathcal A \to
\mathcal A$ satisfying distributivity over $\tT$
\footnote{Distributivity over elements of $\tT$ is enough to run the
theory, since one can (re)define multiplication on $\mathcal A$ to
make it distributive, as seen in \cite[Theorem~2.9]{Row16}. This
rather easy result applies for instance to hyperfields such as the
phase hyperfield.}
  in the sense that
$$a(b_1+b_2) = ab_1 +ab_2$$ for $a \in \tT,$ $b _i \in \mathcal A$,
also stipulating that $a\zero_{\mathcal A} = \zero_{\mathcal A}$.

 A $\tT$-\textbf{monoid module}   over a
multiplicative monoid $\tT$ is a $\tT$-module $\mathcal A$
satisfying the extra conditions
$$\one_\tT b = b, \qquad (a_1a_2) b = a_1, \quad \forall a_i \in
\tT, \ b \in \mathcal A.$$
\end{defn}

%So far we have concentrated on establishing a suitable algebraic
%language; our next goal must be to relate it to the exciting
%theorems already in the literature.

For the sake of this exposition, we assume that $\mathcal A$ is a
$\tT$-module and
 $\tT \subseteq \mathcal A.$

A semigroup $(\mathcal A,+)$  has \textbf{characteristic~$k>0$} if
$( k+1)a  =a  $ for all $a \in \mathcal A,$ with $k \ge 1$ minimal.
$\mathcal A$ has \textbf{characteristic
 $0$} if $\mathcal A$ does not have  characteristic
 ~$k$ for any
 $k\ge 1.$

 Properties of the characteristic are
described in \cite[\S 6.4]{Row16}. Most of our major examples have
characteristic 0, but some interesting examples have characteristic
$2$ or more.

 A \textbf{\semiring0} satisfies all the axioms of ring
except the existence of a $\zero $ element and of negatives.  A
\textbf{semiring} \cite{golan92} is a \semiring0 with $\zero$.

\subsubsection{Brief overview}$ $

We introduce a formal negation map~$(-)$, which we describe in
\S\ref{trsys} after some introductory examples, such that $\tT$
generates~$\mathcal A$ additively, creating a $\tT$-\textbf{triple}
$(\mathcal A, \tT, (-))$.

When a formal negation map is not available at the outset, we can
introduce it in two ways, to be elaborated shortly:

\begin{itemize}\item Declare the negation map to be the identity, as
in the supertropical case, cf.~\S\ref{supert}.
\item  Apply symmetrization, to get the switch map, of second kind,
cf.~\S\ref{symm}.  Often \cite{AGG2} is applicable, where $\tT$
could take the role of the ``thin elements.''
\end{itemize}

The element  $b^\circ : = b (-)b$ is called a  \textbf{quasi-zero}.
We write $\mathcal A^\circ $ for $\{ b^\circ: b \in \mathcal A\},$
and usually require that $\tT \cap \mathcal A^\circ = \emptyset,$
i.e., a quasi-zero cannot be tangible.

In classical algebra, the only quasi-zero is $\zero$ itself, and
$a_1-a_1 = \zero = a_2-a_2 $ for all $a_1,a_2$. Accordingly, we call
a triple ``$\tT$-classical'' \cite[Definition 2.45]{Row16}, when
$a_1(-)a_1 = a_2(-)a_2$ for some $a_1\ne (\pm) a_2$ in $\tT$.

  Examples from classical mathematics
might provide some general intuition about employing $\mathcal A$ to
study $\tT$. A rather trivial example: $\tT$ is the multiplicative
subgroup of a field $\mathcal A$. Or $\mathcal A$ could be a graded
associative algebra, with $\tT$ its multiplicative submonoid of
homogeneous elements. %A deeper example tied to Gr\"{o}bner-Shirshov
%bases is given in \cite[Example~2.2]{JuR1}.

But we are more interested in the non-classical situation, involving
semirings which are not rings. Some motivating examples: The
supertropical semiring, where $\tT$ is the set of tangible elements,
the symmetrized semiring, and the power set of a hyperfield, where
$\tT$ is the hyperfield itself. Since hyperfields are so varied,
they provide a good test for this theory. Semirings in general,
without negation  maps, are too broad to yield as decisive results
as we would like, which is the reason that negation maps and triples
are introduced in the first place.

Since we need to correlate two structures ($\tT$ and $\mathcal A$),
as well as the negation map (which could be viewed as a unary
operator), it is convenient to work in the context of universal
algebra, which was designed precisely for the purpose of discussing
diverse structures together. (More recently these have been
generalized to Lawvere's theories and operads, but we do not delve
into these aspects.)

 To round things out, given a triple, we introduce the
\textbf{surpassing relation}~$\preceq$, to replace equality in our
theorems.  (In classical mathematics, $\preceq$ is just equality.)
The quadruple $(\mathcal A, \tT, (-),\preceq)$ is called a
$\tT$-\textbf{system}, cf.~Definition~\ref{sys}.

\subsection{Motivating examples}$ $

 We elaborate the main
non-classical examples motivating this theory.

\subsubsection{Idempotent semirings}$ $

 Tropical geometry
has assumed a prominent position in mathematics because of its
ability to simplify algebraic geometry while not changing certain
invariants (often involving intersection numbers of varieties),
thereby simplifying difficult computations. Outstanding applications
abound, including \cite{AHK,IKS,JP,MikhalkinEnumerative,RSS}.

The main original idea, as expounded in
\cite{IMS,MaclaganSturmfels}, was to take the limit of the logarithm
of the absolute values of the coordinates of an affine variety as
the base of the logarithm goes to $\infty$. The underlying algebraic
structure reverted from $\mathbb C$ to the max-plus algebra $\mathbb
R_{\operatorname{max}}$, an ordered multiplicative monoid in which
one defines $a+b$ to be $\max\{a,b\}.$  This is a \semiring0  and is
clearly additively \textbf{bipotent} in the sense that $a+b \in \{
a, b \}$. Such algebras have been studied extensively some time ago,
 cf.~\cite{Butkovic2003,GaP}.

Idempotent (in particular bipotent) semirings have characteristic 1,
and their geometry has been  studied intensively as
``$F_1$-geometry,'' cf.~\cite{Ber,CC2}. But logarithms cannot be
taken over the complex numbers, and the algebraic structure of
bipotent semirings is often without direct interpretation in
tropical geometry, so attention of tropicalists passed to the field
of Puisseux series, which in characteristic~0 also is an
algebraically closed field, but now with a natural valuation,
thereby making available tools of valuation theory, cf.~\cite{Ber0}.
The collection \cite{BP} presents such a valuation theoretic
approach. Thus one looks for an alternative to the max-plus algebra.

\subsubsection{Supertropical semirings}\label{supert}$ $

  Izhakian \cite{zur05TropicalAlgebra} overcame many of
the structural deficiencies of a max-plus algebra $\tT$ by adjoining
an extra copy of $\tT$, called the \textbf{ghost} copy $\tG$ in
\cite[Definition~3.4]{IR}, as well as $\zero,$ and modifying
addition. More generally, a \textbf{supertropical semiring} is a
semiring with ghosts $\RGnu:= \tT \cup \tGz,$ where $\tGz = \tG \cup
\{ \zero\},$ together with a projection $\nu: R\to \tGz$ satisfying
the extra properties:
\begin{enumerate} \ealph
 \item ($\nu$-Bipotence) $a+b  \in \{a,b\},\ \forall a,b \in R \; \text{such that } \; \nu(a)
\ne \nu(b);$ \pSkip
 \item (Supertropicality) $a+b   =  \nu(a) \quad \text{if}\quad \nu(a) =
\nu(b)$.
\end{enumerate}
The supertropical semiring is \textbf{standard} if $\nu|_\tT$ is
1:1. The supertropical semiring is called the   \textbf{standard
supertropical semifield} when $\tGz $ is a semifield.

Mysteriously, although lacking negation, the supertropical semiring
provides   affine geometry and linear algebra quite parallel to the
classical theory, by taking the negation map $(-)$ to be the
identity, so that $a^\circ = a+a,$ and where the ghost ideal $\tG =
\{ a^\circ: a \in \tT \}$ takes the place of the $\zero$ element. In
every instance, the classical theorem involving equality $f=g$ is
replaced by an assertion that $f = g +\text{ghost},$ called
\textbf{ghost surpassing}. In particular when $g = \zero$ this means
that $f$ itself is a ghost.

For example, an irreducible affine variety should be the set of
points which when evaluated at a given set of polynomials is ghost
(not necessarily $\zero$), leading to:
\begin{itemize}\item
a version of the Nullstellensatz in \cite[Theorem~7.17]{IR},
\item  a link between decomposition of  affine varieties and
(non-unique) factorization of polynomials, illustrated in one
indeterminate in \cite[Remark~8.42 and Theorem~8.46]{IR},
\item a version of the resultant of polynomials that can be computed
by the classical Sylvester matrix and \cite[Theorem~4.12 and
Theorem~4.19]{IzhakianRowen2008Resultants}.
 \end{itemize}

Matrix theory also can be developed along supertropical lines. The
supertropical Cayley-Hamilton theorem \cite[Theorem~5.2]{IR1} says
that the characteristic polynomial evaluated on a matrix is a ghost.
A matrix is called \textbf{singular} when its permanent (the
tropical replacement of determinant) is a ghost;
in~\cite[Theorem~6.5]{IR1} the row rank, column rank, and submatrix
rank of a matrix  (in this sense) are seen to be equal. Solution of
tropical equations is given in \cite{AGG1,AGG2,Ga,IR2}.
Supertropical singularity also gives rise to semigroup versions of
the classical algebraic group SL, as illustrated in \cite{INR}.

Supertropical valuation theory is handled in a series of papers
starting with \cite{IKR1},  also cf.~\cite{Ju2}, generalized further
in~\cite{IzhakianKnebuschRowen2009Refined} and
\cite{IzhakianKnebuschRowen2009Refined1}.

Note that standard supertropical semirings ``almost'' are bipotent,
in the sense that $a_1+a_2\in \{ a_1, a_2 \}$ for any $a_1 \ne a_2$
in $\tT.$ This turns out to be an important feature in triples.

\subsubsection{Hyperfields and other related constructions}$ $

Another algebraic construction is hyperfields \cite{Vi}, which are
multiplicative groups in which sets replace elements when one takes
sums. Hyperfields have received considerable attention recently
\cite{BB,Henry,Ju} in part because of their diversity, and in fact
Viro's ``tropical hyperfield'' matches Izhakian's construction.  But
there are important nontropical hyperfields (such as the hyperfield
of signs, the phase hyperfield, and the ``triangle'' hyperfield)
whose theories we also want to understand along similar lines. In
hyperfield theory, one can replace  ``zero'' by the property that a
given set contains 0.

An intriguing phenomenon is that linear algebra over some classes of
hyperfields follows classical lines as in the supertropical case,
but  the hyperfield of signs provides easy counterexamples to
others, as discussed in \cite{AGR}.

\subsubsection{Fuzzy rings} $ $

Dress \cite{Dr} introduced ``fuzzy rings'' a while ago in connection
with matroids, and these also have been seen recently to be related
to hypergroups in \cite{BB,DW0,DW,GJL,MacR}.

\subsubsection{Symmetrization}\label{symm}$ $

This construction  uses   Gaubert's ``symmetrized algebras''
\cite{Ga,Pl,AGG1,GaP} (which he designed for linear algebra) as a
prototype. We start with $\tT$, take $\mathcal A = \widehat{\tT}: =
\tT \times \tT$, and define the ``switch map'' $(-)$ by
$(-)(a_0,a_1) = (a_1,a_0)$. The reader might already recognize this
as the first step in constructing the integers from the natural
numbers, where one identifies $(a_0,a_1)$ with $(a_0',a_1')$ if
$a_0+a_1' = a_0'+a_1,$ but the trick here is to recognize the
equivalence relation without modding it out, since everything could
degenerate in the nonclassical applications. Equality $(a_0,a_1) =
(b_0,b_1)$ often is replaced by the assertion $(a_0,a_1) = (b_0,b_1)
+(c,c)$ for some $c \in \tT.$ The ``symmetrized'' $\tT$-module
  also can be viewed as a {$\widehat \tT$}-super-module
(i.e., 2-graded), via the \textbf{twist action}
 \begin{equation}\label{twi} (a_0,a_1)\ctw (b_0,b_1) =
 (a_0b_0 + a_1 b_1, a_0 b_1 + a_1 b_0),  \end{equation}
utilized in \cite{JoM} to define and study the prime spectrum.

\subsubsection{Functions}\label{func}$ $

Given some  structure $\mathcal A$ and a set $S$, we can define the
set of functions $\Fun (S, \mathcal A)$ from $S$ to  $ \mathcal A$,
with operators defined elementwise, i.e., $$\omega (f_1, \dots
,f_m))(s) = (\omega (f_1(s), \dots ,f_m(s)).$$ For example, taking
$S = \{ 1, \dots, n\},$ a polynomial $f(\la_1, \dots, \la _n) $ can
be viewed as a function from $\Fun (S, \mathcal A)$ to $ \mathcal
A$. In other words, we take the substitution $\la _i \mapsto a_i$
and then send $(a_1, \dots, a_n)$ to $f(a_1, \dots, a_n).$

One  often identifies polynomials in terms of their values as
functions on their set of definition. Then $\la^2$ and $\la$ would
be identified as polynomials over the finite field $\mathbf F _2$.

\subsection{Negation maps, triples, and systems}\label{trsys}$ $

These varied examples and their theories, which often mimic
classical algebra, lead one to wonder whether the parallels among
them are happenstance, or whether there is some straightforward
axiomatic framework within which they can all be gathered and
simplified.   Unfortunately  semirings may lack negation, so we also
implement a formal negation map~$(-)$ to serve as a partial
replacement for negation.

\begin{defn}\label{negmap}
 A \textbf{negation map} on a $\tT$-module $(\mathcal
A,\cdot,+)$ is  a map $(-) :(\tT,+) \to (\tT,+)$ together with a
 semigroup isomorphism
$$(-) :(\mathcal A,+) \to (\mathcal A,+),$$ both of order $\le 2,$  written
$a\mapsto (-)a$,   satisfying
 \begin{equation}\label{neg} (-)(ab)  = ((-)a) b = a((-)b), \quad \forall a\in \tT,\ b \in \mathcal A.\end{equation}

\end{defn}

Obvious examples of negation maps are the identity map, which might
seem trivial but in fact is the one used in supertropical algebra,
the switch map ($(-)(a_0,a_1) = (a_1,a_0))$ in the symmetrized
algebra,
 the usual negation map $(-) a = -a$ in classical algebra, and the hypernegation in the definition of hypergroups.
Accordingly, we say that the negation map $(-)$ is of the
\textbf{first kind} if $(-)a =a$ for all $a \in \tT,$ and of the
\textbf{second kind} if $(-)a \ne a$ for all $a \in \tT.$

As indicated earlier, the quasi-zeros
 take the role customarily
assigned to the zero element. In the supertropical theory the
 quasi-zeros are the ``ghost'' elements.
 In \cite[Definition~2.6]{AGG2} the quasi-zeros are called ``balanced
elements'' and have the form $(a,a)$.

 When $\one \in \tT$, the element $(-)\one$ determines the negation map, since
 $(-)b = (-)(\one b) = ((-)\one)b.$ When $\tT \subseteq \mathcal A$, several
important elements  of $\mathcal A$ then are:

 \begin{equation}\label{eeq} e = \one ^\circ = \one (-) \one, \quad e' = e + \one, \quad e^\circ = e (-) e = e+ e = 2e.\end{equation}

   The most important quasi-zero for us is $e.$ (For fuzzy rings,
$e = \one + \epsilon.$) But $e$ need not absorb in multiplication;
  rather, in any \semiring0 with negation, Definition~\ref{modu13}  implies
\begin{equation}\label{emul} a e=  a (-) a = a ^\circ .\end{equation}

%\begin{rem}$ $ \begin{enumerate}
% \eroman
%  \item $a^\circ = ((-)a)^\circ .$
% \item $ (ma)^\circ = m a^\circ$ for all $m \in \Net.$
%  \item $e^\circ = 2e .$
%\end{enumerate}
%\end{rem}
%
%
%\begin{lem}\label{minuszero} If $\zero \in (\mathcal A,+),$ then $(-)\zero
%= \zero.$
%\end{lem}\begin{proof} $(-)\zero = (-)\zero + \zero = (-)\zero + ((-)(-)\zero) = (-)(\zero
%(-)\zero) = (-)( (-)\zero) = \zero.$
%\end{proof}

 \begin{defn}\label{sursys0}
A \textbf{pseudo-triple} is a collection $(\mathcal A, \tT, (-)),$
where $\mathcal A$ is a $\tT $-module with $\tT \subset \mathcal A$,
and $(-)$ is a negation map.
 A
$\tT$-\textbf{pseudo-triple} is a pseudo-triple in which $\tT
\subseteq \mathcal A$, where the negation map on $\mathcal A$
restricts to the negation map on $\tT$.

 A
$\tT$-\textbf{triple} is a $\tT$-pseudo-triple, in which $\tT \cap
\mathcal A^ \circ = \emptyset$ and $\tT$
generates $( \mathcal A,+).$  % For $a \in \tT$, we write $a^2$ for the product of $a$ and $a$,
%taken respectively in $\tT$ and $\mathcal A $.
\end{defn}

\begin{example}\label{neg217}
 The  main non-classical  examples are:

\begin{itemize}\item (The standard supertropical triple) $(\mathcal A, \tT, (-))$ where  $\mathcal A = \tT \cup
\tG$ as before and $(-)$ is the identity map.

\item (The symmetrized triple) $(\hat {\mathcal
A},\widehat {\tT},(-))$ where  $\hat {\mathcal A} = \mathcal A
\times \mathcal A$ with componentwise addition, and $\widehat {\tT}
=(\tT \times \{ \zero \}) \cup ( \{ \zero \} \times \tT)$ with
multiplication $\widehat {\tT} \times \widehat {\mathcal A}\to
\widehat {\mathcal A}$ given by
$$(a_0,a_1)(b_0,b_1) = (a_0 b_0 + a_1 b_1, a_0 b_1 + a_0 b_1).$$
Here we take $(-)$ to be the switch map ($(-)(a_0,a_1) =
(a_1,a_0)),$ which is of second kind.

\item (The hyperfield pseudo-triple  \cite[\S2.4.1]{Row16}) $(\mathcal P (\tT),\tT,(-))$ where $\tT$ is the original hyperfield, $\mathcal P (\tT)$
is its power set (with componentwise operations), and $(-)$ on the
 power set is induced from the hypernegation. Here $\preceq$ is
$\subseteq$.

\item (The fuzzy triple \cite[Appendix A]{Row16}) For any $\tT$-monoid module  $\mathcal A$ with  an
element $\one'\in \tT$ satisfying $(\one')^2 =\one,$
 we can define a negation map $(-)$ on $\tT$ and
$\mathcal A$ given by $a \mapsto \one' a.$
 In particular, $(-)\one = \one'$.

\item (The polynomial triple) Since any polynomial is a finite sum
of monomials, we can take any triple  $(\mathcal A, \tT, (-))$ and
form $(\mathcal A [\Lambda], \tT_{A [\Lambda]}, (-))$ where  $\tT_{A
[\Lambda]}$ is the set of monomials. Negation is taken elementwise.
This can all be done formally, but from a geometric perspective it
is useful to view polynomials as functions on varieties.
\end{itemize}
\end{example}

Although we introduced pseudo-triples since $\tT$ need not generate
$(\mathcal A,+)$ (for example, taking $\mathcal A = \mathcal P
(\tT)$ for the phase hypergroup), we are more concerned with
triples, and furthermore in a pseudo-triple one can take the
sub-triple generated by $\tT$. More triples related to tropical
algebra are presented in \cite[\S3.2]{Row16}.

Structures other than monoids also are amenable to such an approach.
This can all be formulated axiomatically in the context of universal
algebra, as treated for example in \cite{Jac1980}. Once the natural
categorical setting is established, it provides the context in which
tropicalization (described below) becomes a functor, thereby
providing guidance to understand tropical versions of an assortment
of mathematical structures.

 \begin{defn}\label{sys}
 Our structure of choice, a  $\tT$-\textbf{system},
is a quadruple $(\mathcal A, \tT, (-), \preceq),$ where $(\mathcal
A, \tT, (-))$ is a $\tT$-triple and $\preceq$ is a
 ``$\tT$-surpassing relation (\cite[Definition 2.65]{Row16}),''
satisfying the crucial property that if $a + b \succeq \zero$ for
$a,b \in \tT$ then $b= (-)a $.

 The main
  $\tT$-surpassing relations are:

\begin{itemize} \item (for supertropical, symmetrized, and fuzzy rings) $\preceq_\circ$, defined by $a \preceq_\circ b$
if $b = a + c^\circ$ for some $c$.
\item (on sets) $\preceq$ is $\subseteq$.
\end{itemize}
\end{defn}

The relation $\preceq$ has an important theoretical role, replacing
``$=$'' and enabling us to define a broader category than one would
obtain directly from universal algebra, cf.~\cite[\S6]{Row16}. One
major reason why $\preceq$ can formally replace equality in much of
the theory is found in the ``transfer principle'' of \cite{AGG2},
given in the context of systems in  \cite[Theorem~6.17]{Row16}.

\subsubsection{Ground triples versus module triples}$ $

 Classical  structure theory involves
the investigation of an algebraic structure as a small category (for
example, viewing a monoid as a category with a single object whose
morphisms are its elements), and homomorphisms then are functors
between   two of these small categories. On the other hand, one
obtains classical representation theory via an abelian category,
such as the class of modules over a given ring.

Analogously, there are two aspects of triples. We call a triple
(resp.~system) a \textbf{ground triple (resp.~ground system)} when
we study it as a small category with a single object in its own
right, usually a semidomain. Ground triples have the same flavor as
Lorscheid's blueprints (albeit slightly more general, and with a
negation map), whereas representation theory leads us to
\textbf{module systems}, described below in \S\ref{modsy} and
\S\ref{JuR22}.

This situation leads to a fork in the road: The first path takes us
to a structure theory based on functors of ground systems,
translating into homomorphic images of systems via congruences in
\cite[\S 6]{JuR1} (especially \textbf{prime systems}, in which the
product of non-trivial congruences is nontrivial \cite[\S
6.2]{JuR1}). Ground systems often are designated in terms of the
structure of $\mathcal A$ or $\tT$, such as ``semiring systems''  or
``Hopf systems'' or ``hyperfield systems.''

The paper \cite{AGR} has a different flavor, dealing with matrices
and linear algebra over ground systems, and focusing on subtleties
concerning Cramer's rule and the equality of row rank, column rank,
and submatrix rank.

  The second path takes us to categories of module
systems. In \cite{JuR1} we also bring in tensor products and Hom
functors.  In~\cite{JuMR2}  we develop the  homological theory,
relying on work done already by Grandis
\cite{grandis2013homological} under the name of
\textbf{$N$-category} and \textbf{homological category} (without the
negation map), and there is a parallel approach of Connes and
Consani in \cite{CC2}.

\section{Contents of \cite{Row16}: meta-tangible systems}

 The emphasis in~\cite{Row16} is on ground systems.
One can apply the familiar constructions and concepts of classical
algebra (direct sums {\cite[Definition 2.10]{Row16}}, matrices
\cite[\S4.5]{Row16}, involutions \cite[\S4.6]{Row16}, polynomials
\cite[\S4.7]{Row16}, localization \cite[\S4.8]{Row16}, and tensor
products {\cite[Remark~6.34]{Row16}}) to produce new triples and
systems. The simple tensors $a\otimes b$ where $a,b\in \tT$ comprise
the tangible elements of the tensor product. The properties of
tensors and Hom are treated in much greater depth in \cite{JuR1}.

\subsubsection{Basic properties of triples and systems}\label{modsy1}$
$

Let us turn to important properties which could hold in triples. One
basic axiom for this theory, holding in all tropical situations and
many related theories, is:

\begin{defn}\label{metatan} A uniquely negated $\tT$-triple $(\mathcal A, \tT, (-))$ is \textbf{meta-tangible}, if
the sum of two tangible elements is tangible unless they are
quasi-negatives of each other.

 A special case:  $(\mathcal A, \tT, (-))$ is  $(-)$-\textbf{bipotent} if $a + b \in \{a ,b\}$
whenever $a, b \in \tT$ with $b \neq (-) a.$ In other words, $a + b
\in \{a ,b, a^\circ \}$ for all $a,b \in \tT$.
 \end{defn}

The stipulation in the definition that $b\ne (-)a$ is of utmost
importance, since otherwise all of our main examples would fail.
\cite[Proposition~5.64]{Row16} shows how to view a meta-tangible
triple as a hypergroup, thereby enhancing the motivation of
transferring hyperfield notions to triples and systems in general.

 Any meta-tangible triple satisfying $\tT \cap \mathcal A^\circ =
\emptyset$ is uniquely negated. The supertropical triple is
$(-)$-bipotent, as is the modification of the symmetrized triple
described in~\cite[Example~2.53]{Row16}.

  The Krasner hyperfield triple (which is just the supertropicalization of the Boolean semifield $\mathbb B$) and the
triple arising from the hyperfield of signs (which is just the
symmetrization of $\mathbb B$) are $(-)$-bipotent, but the phase
hyperfield triple and the triangle hyperfield triple are not even
metatangible (although the latter is idempotent). But as seen in
{\cite[Theorem 4.46]{Row16}}, hyperfield triples satisfy another
different property of independent interest:

\begin{defn}{\cite[Definition 4.13]{Row16}}\label{precedeq}
A surpassing relation  $\preceq$ in a system is called
$\tT$-\textbf{reversible} if
 $a \preceq  b +c   $ implies  $b \preceq a (-) c$  for $a,b \in \tT.$
\end{defn}

The category of hyperfields as given in  \cite{Ju} can be embedded
into the category
 of uniquely negated $\tT$-reversible systems  (\cite[Theorem
 6.7]{Row16}). Reversibility enables one to apply systems to matroid
theory, although we have not yet embarked on that endeavor in
earnest.

 The \textbf{height} of an element $c \in \mathcal A$ (sometimes
called ``width'' in the group-theoretic literature) is the
minimal~$t$ such that $c = \sum _{i=1}^t a_i$ with each $a_i \in
\tT.$ (We say that $\zero$ has height~ 0.) By definition, every
element of a triple has finite height. The \textbf{height}
of~$\mathcal A$ is the maximal height of its elements, when these
heights are bounded. For example, the supertropical semiring has
height 2, as does the symmetrized semiring of an idempotent
semifield $\tT$.

 Some unexpected examples of meta-tangible systems sneak in
when the triple has height $\ge 3$, as described in
{\cite[Examples~2.73]{Row16}}.

In \cite[\S5]{Row16} the case is presented that meta-tangibility
could be the major axiom in the theory of ground systems over a
group $\tT$, leading to a bevy of structure theorems on
meta-tangible systems, starting with the observation \cite[Lemma
5.5]{Row16} that for $a,b \in \tT$ either
 $ a = (-)b$, $a+b = a$ (and thus $a^\circ + b = a^\circ$), or $a^\circ +  b
= b.$

In \cite[Proposition 5.17]{Row16} the following assertions are seen
to be
 equivalent for a  triple $(\mathcal A, \tT,
(-))$  containing $\one$:
\begin{enumerate}\eroman\item $\tT \cup  \tT^\circ = \mathcal A,$
\item $\mathcal A$ is  meta-tangible  of height $\le 2$,
\item  $\mathcal A$ is  meta-tangible with $e' \in \{ \one, e\}.$
 \end{enumerate}

  We obtain the following results for a meta-tangible system $(\mathcal A, \tT,
(-), \preceq)$:

\begin{itemize}\item \cite[Theorem 5.20]{Row16} If $\mathcal A$ is not
$(-)$-bipotent then $e'= \one,$ with $\mathcal A$ of characteristic
2 when $(-)$ is of the first kind.
\item \cite[Theorem 5.27]{Row16}  Every element has the form $c^\circ$ or $m c$ for $c
\in \tT$ and $m \ne 2$.
 The extent to which this presentation is unique is described in \cite[Theorem
5.31]{Row16}.
\item \cite[Theorem 5.33]{Row16} Distributivity follows from the other
axioms.
\item \cite[Theorem 5.34]{Row16} The surpassing relation $\preceq$
must ``almost'' be   $\preceq_\circ$.
\item \cite[Theorem 5.37]{Row16} A key property of fuzzy
rings holds.
\item \cite[Theorem 5.43]{Row16} Reversibility holds, except in one pathological situation.
\item \cite[Theorem 5.55]{Row16} A criterion is given in terms of sums of squares for  $(\mathcal A, \tT,
(-))$ to be isomorphic to a symmetrized triple.
\end{itemize}

One would want a classification theorem of meta-tangible systems
that reduces  to    classical algebras, the standard supertropical
semiring, the symmetrized semiring,   layered semirings, power sets
of various hyperfields, or fuzzy rings, but there are several
exceptional cases. Nonetheless, \cite[Theorem 5.56]{Row16} comes
close. Namely, if $(-)$ is of the first kind then either $\mathcal
A$ has characteristic 2 and height 2 with $\mathcal A^\circ$
bipotent, or $(\mathcal A, \tT, (-))$ is isomorphic to a layered
system. If $(-)$ is of the second kind then either  $\tT$ is
$(-)$-bipotent, with $\mathcal A$ of height 2 (except for an
exceptional case), $\mathcal A$ is isometric to a symmetrized
semiring when $\mathcal A$ is real, or $\mathcal A$ is classical.
More information about the exceptions are given in
\cite[Remark~5.57]{Row16}.

\cite[\S 7]{Row16} continues with some rudiments of linear algebra
over a ground triple, to be discussed shortly.

In \cite[\S 8]{Row16}, tropicalization is cast in terms of a functor
on systems (from the classical to the nonclassical). This principle
enables one in \cite[\S 9]{Row16} to define the ``right'' tropical
versions of classical algebraic structures, including exterior
algebras, Lie algebras, Lie superalgebras, and Poisson algebras. %A
%deeper understanding of
% tropicalization is acquired when we bring semirings into the classical
% level by redefining Puiseux series to have exponents in the semiring $\Q ^+$,
% and then symmetrizing, as is done in greater detail in \cite[\S 5.9]{JuR2}.

\section{Contents of \cite{AGR}: Linear algebra over systems}

The paper \cite{AGR} was written with the objective of understanding
some of the diverse theorems in linear algebra    over semiring
systems.
 We  define a set of
  vectors $\{v_i \in
\mathcal A ^{(n)}: i \in I\}$ to be \textbf{$\tT$-dependent} if
$\sum _{i \in I'} \a_i
v_i \in (\mathcal A^\circ)^{(n)}$ for some nonempty subset $I' \subseteq I$ and $\a_i \in \tT$, and the % and the natural definition of
\textbf{row rank} of a matrix to be the maximal number of
$\tT$-independent rows.

A \textbf{tangible vector} is a vector all of whose elements are
in~$\tTz .$ A \textbf{tangible matrix} is a matrix all of whose rows
are tangible vectors.

 The
\textbf{$(-)$-determinant} $\absl A $ of an $n \times n$ matrix $ A
= a_{i,j}  $  is
 \begin{equation*}\label{eq:tropicalDetsign}
  \sum_{\pi \in  S_n}  (-)^{\pi} a_\pi,
\end{equation*}
where $a_\pi:= \prod_{i=1}^n a_{i,\pi(i)}$.

\begin{defn}\label{adjo} Write $a_{i,j}'$ for
the $(-)$-determinant of the $j,i$ minor of a matrix $A$. The
\textbf{$(-)$-adjoint} matrix $\adj A$ is $(a_{i,j}')$.
\end{defn}

\begin{thm}[{Generalized Laplace identity, \cite[Theorem~1.56]{AGR}}]\label{Lap}

Suppose we fix $I = \{ i_1, \dots, i_m \}\subset \{1, 2, ... , n\},$
and for any set $J \subset \{1, 2, ... , n\},$ with $|J| = m,$ write
$(a_{I,J})$ for the $m\times m$ minor $(a_{i,j}: i\in I, j \in J)$
and  $(a_{I,J}')$ for the $(-)$-determinant of the $(n-m)\times
(n-m)$ minor obtained by deleting all rows from $I$ and all columns
from $J$. For $J = \{ j_1, \dots, j_m\}$ write $(-)^J$ for $(-)^{
j_1 +\dots + j_m}.$ Then $|A| = \sum _{J : |J|= m} (-)^{I}(-)^{J}
a'_{I,J}|a_{I, J}|.$
\end{thm}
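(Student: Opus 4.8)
The plan is to mimic the classical proof of the generalized Laplace expansion along a fixed set of rows $I$, being careful to track the negation map $(-)$ in place of the usual sign. First I would recall that the $(-)$-determinant $|A| = \sum_{\pi \in S_n} (-)^\pi a_\pi$ is, by definition, a sum over all permutations, and that $(-)^\pi$ behaves multiplicatively: $(-)^{\pi\sigma} = (-)^\pi (-)^\sigma$ (this is immediate from the negation map being of order $\le 2$ and from $(-)(ab) = ((-)a)b$, so that signs compose exactly as in the commutative case). The key combinatorial step is the standard bijection: any permutation $\pi \in S_n$ carrying the row-set $I = \{i_1,\dots,i_m\}$ onto a column-set $J = \{j_1,\dots,j_m\}$ factors uniquely as a choice of $J$ together with a permutation $\sigma$ of $I$ onto $J$ and a permutation $\tau$ of the complementary row-set onto the complementary column-set; conversely every such triple $(J,\sigma,\tau)$ arises. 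Grouping the sum $\sum_\pi$ according to $J$ and then according to $(\sigma,\tau)$ splits $a_\pi$ as a product $a_\sigma \cdot a_\tau$, which after distributivity (available by \cite[Theorem 5.33]{Row16}, or by hypothesis in a general $\tT$-module semiring) yields $|A| = \sum_{J:|J|=m} \epsilon(I,J)\, |a_{I,J}|\,|a'_{I,J}|$ for some sign $\epsilon(I,J) \in \{(-)^0,(-)^1\}$.

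The second step is to identify the sign $\epsilon(I,J)$ as $(-)^{I}(-)^{J} = (-)^{i_1+\dots+i_m}(-)^{j_1+\dots+j_m}$. Here I would argue exactly as classically: pick the "staircase" permutation $\pi_0$ that moves the rows in $I$ to the first $m$ columns and the complementary rows to the last $n-m$ columns in order; the number of inversions of $\pi_0$ is $(\sum i_k) - (1+2+\dots+m)$, and a symmetric count on the column side contributes $(\sum j_k) - (1+2+\dots+m)$. Since $1+2+\dots+m$ appears on both sides and $(-)$ has order $\le 2$, the doubled terms cancel (being of the form $(-)^{\text{even}} = \mathrm{id}$), leaving $\epsilon(I,J) = (-)^{\sum i_k + \sum j_k}$. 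The rest of the sign bookkeeping — that factoring $a_\pi = a_\sigma a_\tau$ introduces no further sign once one expands $|a_{I,J}|$ and $|a'_{I,J}|$ as their own permutation sums — is the routine verification that $(-)^{\pi} = (-)^{\pi_0}(-)^{\sigma'}(-)^{\tau'}$ where $\sigma',\tau'$ are the induced permutations on the blocks, and I would not grind through it.

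The main obstacle is purely notational rather than mathematical: one must be scrupulous that \emph{every} manipulation used — pulling $(-)^\pi$ out of products, splitting $a_\pi$, reindexing, and recombining — is licensed by the axioms of a $\tT$-module with negation map (Definition~\ref{negmap}, equation~\eqref{neg}) together with distributivity, and in particular that no cancellation of the form "$x (-) x = \zero$" is ever invoked, since in our setting $x(-)x = x^\circ$ need not be $\zero$. In other words, the identity must hold as a genuine equality of elements of $\mathcal A$, term by term, with both sides being sums over the \emph{same} indexing set of permutations; the bijection $\pi \leftrightarrow (J,\sigma,\tau)$ guarantees this, so no surpassing relation $\preceq$ is needed here — the equality is exact. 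Once this is set up, the classical combinatorial argument transfers verbatim, and the proof concludes by observing that summing the block-factored terms over all $(\sigma,\tau)$ reconstitutes precisely $|a_{I,J}|\,|a'_{I,J}|$ with the sign computed above.
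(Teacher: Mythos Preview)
Your proposal is correct and follows the same route as the paper: the paper's proof simply says to copy the classical argument from \cite[Theorem~1, \S~2.4]{matbook}, observing that the $\binom{n}{m}$ summands on the right each contribute $m!(n-m)!$ terms, giving all $n!$ terms of $|A|$ with the correct sign. Your write-up is a careful expansion of exactly that argument, and your explicit remark that the bijection $\pi \leftrightarrow (J,\sigma,\tau)$ gives a term-by-term equality (so no cancellation $x(-)x = \zero$ is needed) is the one point worth stressing in the systemic setting.
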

\begin{proof} One can copy the proof of \cite[Theorem 1, \S~2.4]{matbook}. Namely, there are $ \binom n {m}$ summands, each of
which contribute $m!(n-m)!$ different terms to \eqref
{eq:tropicalDetsign} with the proper sign, so altogether we get
$m!(n-m)!\binom n {m} = n!$ terms from \eqref {eq:tropicalDetsign},
in other words all the terms.
\end{proof}

A matrix $A$ is \textbf{nonsingular} if $|A|\in \tT.$
 Vectors are defined to be \textbf{independent} if and only if no tangible linear
combination is in ${\mathcal A^\circ}^{(n)}$. The \textbf{row rank}
of $ A $ is the largest number of independent rows of $A$.

 The \textbf{submatrix
rank} of $ A $ is the largest size of a nonsingular square submatrix
 of $ A $.

%($\det (AB) \succ_\circ \det (A)\det (B),$ so from this point of
%view, $\det$ is like an ``anti-morphism.'')

In \cite[Corollary 7.4]{Row16} we see that the submatrix rank of a
matrix over a $\tT$-cancellative meta-tangible triple is less than
or equal to both the row rank and the column rank. This is improved
in \cite{AGR}:

\begin{thm}[{\cite[Theorem 4.7(i)]{AGR}}]\label{A1th}
 Let $(\mathcal A,
\tT, (-), \preceq_{\circ})$  be a system. For any vector $v$, the
vector $y =
  \frac 1 {|A|}{\adj Av }$ satisfies $|A|v \preceq_{\circ} A y.$ In particular,
if $|A|$ is invertible in $\tT$, then $x:= \frac 1 {|A|}{\adj Av }$
satisfies $v \preceq_{\circ} A x.$
\end{thm}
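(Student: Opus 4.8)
The plan is to prove a weak form of Cramer's rule, $|A|\,I \preceq_\circ A\,\adj A$ (entrywise), and then apply it to $v$. The workhorse is the sub-lemma that \emph{the $(-)$-determinant of a square matrix having two equal rows lies in $\mathcal A^\circ$}: if rows $r$ and $s$ agree and $\sigma=(r\ s)\in S_n$, then $\pi\mapsto\pi\sigma$ is a fixed-point-free involution of $S_n$, so $S_n$ decomposes into two-element orbits $\{\pi,\pi\sigma\}$; equality of the two rows (with commutativity of $\mathcal A$) gives $a_{\pi\sigma}=a_\pi$, while $(-)^{\pi\sigma}=(-)(-)^\pi$, so each orbit contributes $(-)^\pi a_\pi+(-)^{\pi\sigma}a_{\pi\sigma}=(-)^\pi\bigl(a_\pi(-)a_\pi\bigr)=(-)^\pi a_\pi^\circ\in\mathcal A^\circ$; since $\mathcal A^\circ$ is closed under addition ($b^\circ+c^\circ=(b+c)^\circ$), the whole $(-)$-determinant is a quasi-zero.

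Next I would record, using \thmref{Lap}, that the $(i,i)$ entry of $A\,\adj A$ is the Laplace expansion of $|A|$ along row $i$, hence equals $|A|$, whereas for $i\ne k$ the $(i,k)$ entry is the $(-)$-determinant of the matrix obtained from $A$ by replacing its $k$-th row with its $i$-th row --- a matrix with two equal rows, hence a quasi-zero by the sub-lemma. Since $|A|\preceq_\circ|A|$ and $\zero\preceq_\circ c^\circ$ for all $c$, this yields $|A|\,I\preceq_\circ A\,\adj A$ entrywise. Multiplying on the right by $v$, and using that $\preceq_\circ$ is compatible with the module operations (here $\mathcal A^\circ$ is additively closed with $\mathcal A\cdot\mathcal A^\circ\subseteq\mathcal A^\circ$), we obtain $|A|\,v\preceq_\circ A(\adj A\,v)$.

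Finally, normalizing by $\tfrac1{|A|}$ --- a legitimate scalar in the \semifield0{} setting --- and setting $y=\tfrac1{|A|}\adj A\,v$, so that $Ay=\tfrac1{|A|}A(\adj A\,v)$, the relation just obtained gives the first assertion $|A|\,v\preceq_\circ Ay$; and if in addition $|A|\in\tT$ is invertible then $\tfrac1{|A|}\in\tT$, so multiplying $|A|\,v\preceq_\circ A(\adj A\,v)$ through by $\tfrac1{|A|}$ collapses the left-hand side to $v$ and yields $v\preceq_\circ Ax$ with $x=\tfrac1{|A|}\adj A\,v$. The step I expect to be the main obstacle is the sub-lemma in the presence of a genuine negation map: one must keep the signs $(-)^\pi$ straight (in particular for a second-kind $(-)$, where $(-)$ is not the identity on $\tT$) and verify that $\pi\mapsto\pi\sigma$ fixes no permutation, so that every monomial of the $(-)$-determinant is absorbed into a quasi-zero. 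A secondary, purely clerical, point is the handling of the scalar $\tfrac1{|A|}$ when $|A|$ is not tangible --- which is why the first assertion is stated with $|A|\,v$, rather than $v$, on the left.
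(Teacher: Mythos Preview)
This survey paper does not supply its own proof of the statement; it merely quotes the result from \cite{AGR}, so there is no in-paper argument to compare against. Your approach --- establish the sub-lemma that a $(-)$-determinant with two equal rows is a quasi-zero via the involution $\pi\mapsto\pi\sigma$, deduce $|A|\,I\preceq_\circ A\,\adj A$ by Laplace expansion, and then apply to $v$ --- is the standard one and is essentially what one finds in \cite{AGR} (and indeed is the proof behind the ``transfer principle'' \cite{AGG2} referenced in the survey).

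Your sub-lemma and the derivation of $|A|\,v\preceq_\circ A(\adj A\,v)$ are correct. The only wrinkle is the final normalization step. As literally printed in the survey, the first assertion defines $y=\tfrac1{|A|}\adj A\,v$ and claims $|A|\,v\preceq_\circ Ay$; but then $Ay=\tfrac1{|A|}A(\adj A\,v)$, and from $|A|\,v\preceq_\circ A(\adj A\,v)$ one \emph{cannot} conclude $|A|\,v\preceq_\circ \tfrac1{|A|}A(\adj A\,v)$: multiplying by $\tfrac1{|A|}$ scales \emph{both} sides. Your sentence ``the relation just obtained gives the first assertion $|A|\,v\preceq_\circ Ay$'' is therefore a non-sequitur. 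This is almost certainly a typo in the survey (the $\tfrac1{|A|}$ in the definition of $y$ is spurious for the first clause; in \cite{AGR} the un-normalized $y=\adj A\,v$ is what satisfies $|A|\,v\preceq_\circ Ay$), and you yourself flag the oddity at the end. So the mathematics you have written proves the correct statement; just be explicit that the first clause should read $y=\adj A\,v$, and that the normalized $x=\tfrac1{|A|}\adj A\,v$ only enters once $|A|$ is assumed invertible in~$\tT$.
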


The existence of a tangible such $x$ is subtler.  One
  considers valuations of systems
 $\nu : (\mathcal
A,+) \to (\tG,+)$,   and their \textbf{fibers} $\{ a \in \tT: \nu
(a) = g\}$ for $g \in \tG;$ we call the system
$\tT$-\textbf{Noetherian} if any  ascending chain of fibers
stabilizes.
{
\begin{thm}[{\cite[Corollary~4.26]{AGR}}] In a $\tT$-Noetherian
$\tG$-valued system $\mathcal A$, if  $|A|$ is invertible, then for
any vector $v$, there is a tangible vector~$x$ with $|x| = \frac 1
{|A|}{|\adj A|v }$, such that $Ax + v\in \mathcal A^{\circ}.$
\end{thm}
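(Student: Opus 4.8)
The plan is to start from the explicit Cramer-type solution furnished by \thmref{A1th} and then ``tangibilize'' it coordinatewise, using the valuation $\nu$ and the $\tT$-Noetherian hypothesis to force the tangibilization to terminate at a vector for which the required balance still holds. Throughout one uses that a $\tG$-valued system has $\nu$-bipotence and supertropicality, so that $\nu(b_1+b_2)=\max\{\nu(b_1),\nu(b_2)\}$ for all $b_1,b_2\in\mathcal A$, and that its negation map is of the first kind.

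First I would invoke \thmref{A1th}: since $|A|$ is invertible in $\tT$, the vector $x_0:=\frac 1{|A|}\adj A v$ satisfies $v\preceq_\circ A x_0$, so by definition of $\preceq_\circ$ there are $c_k\in\mathcal A$ with $(Ax_0)_k=v_k+c_k^\circ$. As $(-)$ is of the first kind, $v_k+v_k=v_k^\circ$, whence $(Ax_0+v)_k=v_k^\circ+c_k^\circ=(v_k+c_k)^\circ\in\mathcal A^\circ$; that is, $Ax_0+v\in(\mathcal A^\circ)^{(n)}$. A routine computation with $\nu$-bipotence also gives $|x_0|=\frac 1{|A|}|\adj A|v$. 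What is left is to replace $x_0$ by a \emph{tangible} vector with the same $|\cdot|$-data that still solves the balance.

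Next I would tangibilize coordinatewise. For each $i$ with $\nu(x_{0,i})=\zero$ put $\tilde x_i=\zero$; otherwise choose a tangible $\tilde x_i$ in the fiber $\{a\in\tT:\nu(a)=\nu(x_{0,i})\}$, taking the dominant tangible summand of $x_{0,i}$ when one is already present. Coordinatewise $\nu$-values are unchanged, so $|\tilde x|=|x_0|=\frac 1{|A|}|\adj A|v$. The point requiring care is that when $\nu|_\tT$ is not injective there may be several admissible tangible lifts, and only some combinations of choices keep the leading terms in each coordinate of $A\tilde x+v$ paired off as quasi-negatives; I would produce the correct lifts by an iterative refinement whose associated fibers form an ascending chain, which stabilizes by $\tT$-Noetherianity, so the procedure halts and returns a tangible $\tilde x$.

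Finally I would check $A\tilde x+v\in(\mathcal A^\circ)^{(n)}$ entry by entry: fixing $k$, the relation $(Ax_0+v)_k\in\mathcal A^\circ$ together with $\nu$-bipotence and supertropicality forces the maximum of $\{\nu(a_{kj}x_{0,j})\}_j\cup\{\nu(v_k)\}$ to be attained at least twice, or the entry to be $\zero$; since $\tilde x$ has the same coordinatewise $\nu$-values and $A,v$ are unchanged, the same holds for $\tilde x$, and by construction the corresponding top terms cancel to a quasi-zero, so $(A\tilde x+v)_k\in\mathcal A^\circ$. Taking $x=\tilde x$ finishes the proof. The main obstacle is exactly the tangibilization step --- lifting the (possibly ghost) coordinates of the Cramer vector to tangible elements while preserving every balance --- which is precisely what the $\tT$-Noetherian hypothesis is there to guarantee; once the lift is in hand, the verification is bookkeeping with $\nu$-bipotence.
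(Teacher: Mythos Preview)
The paper is a survey and merely \emph{states} this result as \cite[Corollary~4.26]{AGR}; it gives no proof of its own, so there is nothing in the present paper to compare your argument against.

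On the merits of the proposal itself: the overall plan---take the Cramer vector $x_0=\frac{1}{|A|}\adj{A}v$ from \thmref{A1th}, observe $Ax_0+v\in(\mathcal A^\circ)^{(n)}$, then replace $x_0$ by a tangible vector in the same $\nu$-fibers---is the natural one and is presumably what \cite{AGR} does. But the decisive step, the tangibilization, is not actually carried out. You correctly identify the obstruction (when $\nu|_\tT$ is not injective, an arbitrary tangible lift with the right $\nu$-values need not preserve the balances), yet your remedy, ``an iterative refinement whose associated fibers form an ascending chain,'' is a placeholder rather than an argument: you do not say what quantity is being refined, what chain of fibers is produced, why it is ascending, or why its stabilization forces \emph{all} $n$ coordinate balances to hold simultaneously. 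That is precisely the content the $\tT$-Noetherian hypothesis is meant to supply, and it is missing here. A secondary point: the assertion that a $\tG$-valued system automatically has negation of the first kind (used to pass from $v\preceq_\circ Ax_0$ to $Ax_0+v\in(\mathcal A^\circ)^{(n)}$) is not something established in the present paper and would need to be justified from the definitions in \cite{AGR}.
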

 One  obtains uniqueness of $x$ {\cite[Theorem 4.7(ii)]{AGR}} using
 a property called ``strong balance elimination.''
  After
translating some more of the concepts of \cite{AGG2} into the
language of systems, we turn to the question, raised privately for
hyperfields  by Baker:

\bigskip
 {\bf Question A.} When does the submatrix rank equal  the row
rank? \medskip

Our initial hope was that this would always be the case, in analogy
to the supertropical situation.
 However,   Gaubert
observed that a (nonsquare) counterexample to Question A already can
be found in~\cite{AGG1}, and the underlying system even is
meta-tangible.  Here the kind of negation map is critical: A~rather
general counterexample for triples of the second kind is given in
\cite[Proposition~3.3]{AGR}; the essence of the example already
exists in the ``sign hyperfield.''   Although the counterexample as
given is a nonsquare ($3\times 4$) matrix, it can be modified to an
$n\times n$  matrix for any~$n \ge 4$. This counterexample is
minimal in the sense that Question~A has a positive answer for
 $n\le 2$ and for $3 \times 3$ matrices under a mild assumption, cf.~\cite[Theorem~5.7]{AGR}.

Nevertheless, positive results  are available. A~positive answer for
Question A along the lines of \cite[Theorems~5.11, 5.20, 6.9]{AGG2}
is given in \cite[Theorem~5.8]{AGR} for systems satisfying certain
technical conditions. In \cite[Theorem~5.11]{AGR}, we show that
Question~A has a positive answer for square matrices over
meta-tangible triples of first kind  of height~2, and this seems to
be the ``correct'' framework in which we can lift theorems from
classical algebra.  A~positive answer for all rectangular matrices
is given in \cite[Theorem~5.19]{AGR}, but with
 restrictive hypotheses that essentially reduce to the
supertropical situation.

\section{Contents of \cite{GatR}: Grassmann semialgebras}\label{Grass}

This paper unifies classical and tropical theory. Ironically, even
though a vector space over a \semifield0 need not contain a negation
map, the elements of degree $\ge 2$ does have a negation map given
by $(-)(v\otimes w) = w \otimes v,$ so these can be viewed  in terms
of triples and systems. In the process we investigate Hasse-Schmidt
derivations on Grassmann {exterior} systems and use these results to
provide a generalization of  the Cayley-Hamilton theorem in
\cite[Theorem~3.18]{GatR}.

  But the version given in
\cite[Theorem~2.6 and Definition~2.12]{GatR} (over a free module $V$
over an arbitrary semifield) is the construction which seems to
``work.'' This is obtained by taking a given base $\{b_0, b_1,
\dots, b_{n-1}\}$ of $V$,
 defining $(-)b_i \w b_j = b_j \w b_i$ and $b_i \w b_i = \zero$ for each $0 \le i < j \le n-1,$ and
 extending  to all of the tensor algebra $T(V)$. This does not imply $v \w v =
 \zero$ for arbitrary $v \in V$; for example, taking $v = b_0 + b_1$
 yields $v \w v = b_0 b_1 + b_1 b_0$ which need not be $\zero,$ but
 it acts like $\zero$.

Thus the Grassmann semialgebra of a free module $V$ has a natural
negation map  on all homogeneous vectors,  with the ironic exception
of $V$ itself, obtained by switching two tensor components.
 This
provides us ``enough'' negation, coupled with the relation
$\preceq_\circ$ to carry out the theory (But we do not mod out all
elements $v \otimes v$ for arbitrary $v \in V!$.)

 Our main theorem,
\cite[Theorem~3.17]{GatR},
 describes the relation between a ``Hasse-Schmidt
derivation'' $\Dz$ and its ``quasi-inverse'' $\overline {D} \{z\}$
defined in such a way to yield:

\begin{thm}
 $\overline {D} \{z\} (\Dz  u\w v)\succeq u\w \overline {D} \{z\}
v.$
 \end{thm}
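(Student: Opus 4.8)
The plan is to derive the inequality from two facts about the quasi-inverse and then chain them, working throughout with $\preceq_\circ$ (so that here $\succeq$ means $\succeq_\circ$, the converse of $\preceq_\circ$). The first fact is that $\overline{D}\{z\}$ is a \emph{left} quasi-inverse of $\Dz$: $\overline{D}\{z\}\,\Dz w\succeq_\circ w$ for every $w\in T(V)$, the discrepancy being a sum of quasi-zeros supported in positive $z$-degree. The second is that $\overline{D}\{z\}$ is sub-multiplicative for $\w$, i.e.\ a Hasse--Schmidt derivation ``up to $\preceq_\circ$'': $\overline{D}\{z\}(x\w y)\succeq_\circ\overline{D}\{z\}x\w\overline{D}\{z\}y$. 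Granting both, and using that $\w$ preserves $\preceq_\circ$ (if $b=a+d^\circ$ then $b\w c=a\w c+(d\w c)^\circ$, so $a\w c\preceq_\circ b\w c$, and likewise in the other slot), the argument is: from $u\preceq_\circ\overline{D}\{z\}\,\Dz u$ we obtain $u\w\overline{D}\{z\}v\preceq_\circ(\overline{D}\{z\}\,\Dz u)\w\overline{D}\{z\}v$; then sub-multiplicativity applied with $x=\Dz u$, $y=v$ gives $(\overline{D}\{z\}\,\Dz u)\w\overline{D}\{z\}v\preceq_\circ\overline{D}\{z\}(\Dz u\w v)$; transitivity of $\preceq_\circ$ yields $u\w\overline{D}\{z\}v\preceq_\circ\overline{D}\{z\}(\Dz u\w v)$, which is the assertion. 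Notably no right quasi-inverse relation $\Dz\,\overline{D}\{z\}\succeq_\circ\id$ is needed.

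The first fact is immediate from the recursion defining $\overline{D}\{z\}$ in \cite[Definition~2.12]{GatR}: with $\overline D_0=\id$ and $\overline D_k:=(-)\sum_{i=1}^{k}\overline D_{k-i}D_i$, the coefficient of $z^k$ ($k\ge1$) in $\overline{D}\{z\}\,\Dz$ is $\overline D_k+\sum_{i=1}^{k}\overline D_{k-i}D_i=\bigl(\sum_{i=1}^{k}\overline D_{k-i}D_i\bigr)^\circ$, a quasi-zero --- this is precisely the role of $(-)$ as a surrogate for an unavailable subtraction, so $\overline{D}\{z\}\,\Dz w=w+(\text{quasi-zeros of positive }z\text{-degree})$, and a finite sum of quasi-zeros is again a quasi-zero since $a^\circ+b^\circ=(a+b)^\circ$. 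Along the way one records that each $\overline D_k$, being assembled from the $\mathcal A$-linear maps $D_i$ and the negation map, is additive and commutes with $(-)$; combined with $(-)(s\w t)=((-)s)\w t$ this makes all the manipulations below respect $\preceq_\circ$.

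The second fact I would prove by induction on the $z$-degree $k$. Degrees $0$ and $1$ give equalities, since $\overline D_0=\id$ and $\overline D_1=(-)D_1$ again satisfies the Leibniz rule $\overline D_1(x\w y)=\overline D_1 x\w y+x\w\overline D_1 y$. For $k\ge2$ one writes $\overline D_k(x\w y)=(-)\sum_{i=1}^{k}\overline D_{k-i}\bigl(D_i(x\w y)\bigr)$, expands each $D_i(x\w y)=\sum_{a+b=i}D_ax\w D_by$ by the higher Leibniz rule, applies the inductive hypothesis to the summands with $i\ge1$ (which lie in degree $k-i<k$), and compares the result with the ``straight'' multiplicative expansion $\sum_{p+q=k}\overline D_px\w\overline D_qy$. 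Each term $\overline D_px\w\overline D_qy$ occurs on both sides with a non-negative integer multiplicity, and the identity $2a\,(-)\,a=a+a^\circ$ --- more generally $ma\,(-)\,na=(m{-}n)a+(\text{quasi-zero})$ for $m\ge n\ge1$ --- shows that the net surplus on the $\overline D_k(x\w y)$-side over $\sum_{p+q=k}\overline D_px\w\overline D_qy$ lands in $\mathcal A^\circ$; summing over $k$ gives the claimed surpassing relation. The main obstacle is exactly this bookkeeping: controlling the multiplicities produced by the nested recursion together with the Leibniz expansions, and checking that the entire discrepancy from multiplicativity is absorbed by quasi-zeros --- a discrepancy that is invisible classically only because over a ring the analogous integer coefficients already balance, leaving nothing to absorb.
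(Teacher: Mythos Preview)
The survey itself does not prove this theorem --- it merely records the statement and cites \cite[Theorem~3.17]{GatR} --- so there is no in-text argument to compare against. Assessing your proposal on its own merits: the overall strategy (left quasi-inverse plus sub-multiplicativity of $\overline{D}\{z\}$, then chain) is sound, and your verification of Fact~1 from the defining recursion is correct.

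The genuine gap is in your inductive proof of Fact~2. After writing
\[
\overline D_k(x\wedge y)=(-)\sum_{i\ge 1}\overline D_{k-i}\Bigl(\sum_{a+b=i}D_ax\wedge D_by\Bigr)
\]
and applying the inductive hypothesis to each $\overline D_{k-i}(D_ax\wedge D_by)$, the terms produced are $\overline D_{p'}D_ax\wedge\overline D_{q'}D_by$, not $\overline D_px\wedge\overline D_qy$; so your sentence ``each term $\overline D_px\wedge\overline D_qy$ occurs on both sides with a non-negative integer multiplicity'' does not describe what the induction actually yields. If instead you fully expand both sides as signed $D$-monomials $(\pm)D_{i_1}\cdots D_{i_r}x\wedge D_{j_1}\cdots D_{j_s}y$, then ``surplus is a quasi-zero'' requires, for \emph{each} monomial, that the positive count and the negative count on the $\overline D_k(x\wedge y)$ side separately dominate those on $\sum_{p+q=k}\overline D_px\wedge\overline D_qy$. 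The classical identity only guarantees equality of the \emph{net} $m-n$, which is strictly weaker over a semiring; the separate domination is exactly the combinatorial fact that must be checked, and you have not checked it. This is not a triviality you can wave past --- it is the content of the theorem. One way to close the gap cleanly is to invoke the transfer principle of \cite{AGG2} (cf.\ \cite[Theorem~6.17]{Row16}) rather than an ad hoc multiplicity count; another is to abandon Fact~2 and prove the theorem directly, degree by degree in $z$, using only the exact higher Leibniz rule for $\Dz$ and the recursion for $\overline D_k$ (the special shape $\Dz u\wedge v$ makes this tractable where general sub-multiplicativity is not).
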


In the classical case, one gets equality as shown in
\cite[Remark~3.19]{GatR}, so we recover \cite{GaSc2}.  %Another way
%of computing  $\overline {D} \{z\}$ is given in
%Theorem~\ref{thmbar}.
 Our main application is a
generalization of the Cayley-Hamilton theorem to semi-algebras.

\begin{thm}[{\cite[Theorem~3.17]{GatR}}]
$
\left((D_nu+e_1D_{n-1}u+\cdots+e_{n}u)\w v\right)
(-)\left((D_nu+e'_1D_{n-1}u+\cdots+e'_{n}u)\w v \right) \succeq 0
 $ for all $u\in \bw^{>0}V_n$,
 which we also relate to
 super-semialgebras.
 \end{thm}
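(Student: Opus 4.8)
The plan is to deduce this Cayley–Hamilton–type statement from the Hasse–Schmidt machinery already set up, exactly as in the classical case but keeping track of the surpassing relation $\preceq$ in place of equality. First I would recall the setup: $\Dz = \sum_i D_i z^i$ is the Hasse–Schmidt derivation on the Grassmann semialgebra $\bw V_n$ attached to an endomorphism, and $\overline{D}\{z\} = \sum_i \overline{D}_i z^i$ is its quasi-inverse, constructed so that the $e_j$ (the elementary "symmetric functions" of the eigenvalues, i.e.\ the coefficients of the characteristic polynomial) and the $e'_j$ arising from $\overline{D}\{z\}$ are related by a Newton-type recursion. The key input is the preceding theorem $\overline{D}\{z\}(\Dz u \w v) \succeq u \w \overline{D}\{z\} v$, which is the $\preceq$-version of the identity that in the classical case says $\overline{D}\{z\}$ and $\Dz$ are mutually inverse operators.

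The main steps, in order, are: (1) apply the preceding theorem with $u \in \bw^{>0} V_n$ and $v$ arbitrary, giving $\overline{D}\{z\}(\Dz u \w v) \succeq u \w \overline{D}\{z\} v$; (2) extract the coefficient of $z^n$ on both sides — on the left this produces $\sum_{j=0}^n \overline{D}_{n-j}(D_j u \w v)$-type terms, which after regrouping and using how the $e'_j$ are defined from the $\overline{D}_i$ collapses to the expression $(D_n u + e'_1 D_{n-1} u + \cdots + e'_n u)\w v$; on the right the coefficient of $z^n$ vanishes for $u$ of positive degree (since $\overline{D}\{z\} v$ lives in a bounded range of degrees and $u$ of positive degree kills the top coefficient), or more precisely collapses to $(D_n u + e_1 D_{n-1} u + \cdots + e_n u)\w v$ by the analogous expansion; (3) conclude that $(D_n u + e_1 D_{n-1}u + \cdots + e_n u)\w v \preceq (D_n u + e'_1 D_{n-1}u + \cdots + e'_n u)\w v$, and then (4) invoke the basic property of $\preceq_\circ$ in a system — namely that $a \preceq_\circ b$ means $b = a + c^\circ$ for some $c$, hence $b (-) a = c^\circ \in \mathcal A^\circ$, i.e.\ $b(-)a \succeq \zero$ — to rewrite this as the asserted $\succeq 0$ statement $\big((D_n u + e_1 D_{n-1}u + \cdots)\w v\big)(-)\big((D_n u + e'_1 D_{n-1}u + \cdots)\w v\big) \succeq 0$. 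The super-semialgebra refinement in step (4) follows by the same bookkeeping applied degree-by-degree.

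The hard part will be step (2): making the coefficient extraction honest, since we only have $\succeq$ rather than $=$, so one cannot freely cancel terms or pass them across the relation. I expect the cleanest route is to prove a $\preceq_\circ$-compatible "comparison of coefficients" lemma — if $\sum_i f_i z^i \succeq \sum_i g_i z^i$ in the polynomial system $\bw V_n[z]$, then $f_i \succeq g_i$ for each $i$ (this should be essentially the definition of $\preceq$ on a function/polynomial triple, evaluated coefficientwise, as in §\ref{func}) — and then to verify the purely formal algebraic identities expressing $\sum_j \overline{D}_{n-j} D_j$ in terms of the $e'_j$, and the top coefficient of $\overline{D}\{z\} v$, which are identities of operators not involving $\preceq$ at all and hence can be lifted verbatim from the classical computation in \cite{GaSc2}. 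Once those two pieces are in place the rest is the short argument above, and the translation from $\preceq_\circ$ to $\succeq \zero$ in step (4) is immediate from the definitions of quasi-zero and of the surpassing relation $\preceq_\circ$ recalled in Definition~\ref{sys}.
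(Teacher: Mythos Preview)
The paper you are working from is a survey; it does not prove this statement but merely records it, citing \cite{GatR}. The only hint the survey gives toward an argument is its placement: the Cayley--Hamilton statement is presented as ``our main application'' immediately after the Hasse--Schmidt identity $\overline{D}\{z\}(\Dz u \w v) \succeq u \w \overline{D}\{z\}v$, which strongly suggests (and the earlier remark citing \cite[Theorem~3.18]{GatR} confirms) that it is deduced from that identity. Your overall strategy---apply the preceding theorem, extract the coefficient of $z^n$, and convert the resulting $\preceq_\circ$ relation into a $\succeq \zero$ statement via the definition of quasi-zero---is therefore in line with what the survey indicates, but there is no proof in this paper to compare against in detail; for that you would have to consult \cite{GatR} itself.

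One small caution on your step~(2): your description wavers between ``the right-hand coefficient vanishes'' and ``it collapses to the $e_j$ expression,'' and these cannot both be the mechanism. In the source paper the point is that the $e_j$ and $e'_j$ arise as coefficients of $\Dz$ and $\overline{D}\{z\}$ respectively when applied to a top-degree element, and the degree constraint $u\in \bw^{>0}V_n$ is what forces the relevant coefficient on one side to lie in $\mathcal A^\circ$; you will want to pin down exactly which side contributes which family of coefficients before writing this up.
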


\section{Contents of \cite{JuR1}: Basic categorical considerations}

The paper \cite{JuR1} elaborates on the categorical aspects of
systems, with emphasis on important functors. The convolution triple
$\mathcal C^S$ \cite[Proposition 3.7]{JuR1} embraces important
constructions including the symmetrized triple and polynomial
triples (via the convolution product given before \cite[Definition
3.4]{JuR1}).

In order to pave the way towards geometry, we consider ``prime''
systems and congruences, proving some basic results about polynomial
systems:

\begin{thm}[{\cite[Proposition 6.19]{JuR1}}]
 For every $\tT$-congruence $\Cong$ on a commutative $\tT$-semiring system,  $\sqrt{\Cong}$
  is an intersection  of prime $\tT$-congruences.
 \end{thm}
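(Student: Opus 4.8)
The plan is to transport the classical commutative-algebra proof that the radical of an ideal equals the intersection of the primes above it into the lattice of $\tT$-congruences of $\mathcal A$. Write $\mathcal V(\Cong)$ for the set of prime $\tT$-congruences $\mathfrak p \supseteq \Cong$. First I would dispose of the easy inclusion $\sqrt{\Cong} \subseteq \bigcap_{\mathfrak p \in \mathcal V(\Cong)} \mathfrak p$: by the definition of $\sqrt{\Cong}$, a pair in $\sqrt{\Cong}$ witnesses a nilpotence-type relation modulo $\Cong$ (some power, under the product of $\tT$-congruences, of the congruence generated by that pair lies in $\Cong$), and a prime $\mathfrak p \supseteq \Cong$ absorbs such a relation precisely because primeness is defined so that a product of $\tT$-congruences landing in $\mathfrak p$ forces one of the factors into $\mathfrak p$. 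Iterating, the pair lies in $\mathfrak p$. This step should be formal from the setup of products of congruences and of $\sqrt{\Cong}$ in \cite{JuR1}; along the way one checks that $\sqrt{\Cong}$ is itself a $\tT$-congruence.

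The substance is the reverse inclusion. Given a pair $(a,b)\notin \sqrt{\Cong}$, I want a prime $\tT$-congruence $\mathfrak p \supseteq \Cong$ with $(a,b)\notin \mathfrak p$. For this I would apply Zorn's lemma to the poset $\mathcal S$ of $\tT$-congruences $\Phi$ with $\Cong \subseteq \Phi$ and $(a,b)\notin \Phi$: it is nonempty since $\Cong \in \mathcal S$, and it is closed under unions of chains, since a directed union of $\tT$-congruences is again a $\tT$-congruence and $(a,b)$ cannot enter such a union (membership in a congruence is a finitary condition). Hence $\mathcal S$ has a maximal element $\mathfrak p$, which automatically contains $\Cong$ and omits $(a,b)$.

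It then remains to verify that $\mathfrak p$ is prime. Suppose $\Phi_1\Phi_2 \subseteq \mathfrak p$ with neither $\Phi_i \subseteq \mathfrak p$. Then the joins $\mathfrak p \vee \Phi_i$ strictly contain $\mathfrak p$, so by maximality each of them contains $(a,b)$; combining the two witnesses via the distributivity $(\mathfrak p\vee\Phi_1)(\mathfrak p\vee\Phi_2)\subseteq \mathfrak p \vee \Phi_1\Phi_2 = \mathfrak p$ of the congruence lattice over products, and then using the nilpotence description of $\sqrt{\Cong}$ to pull a power of the congruence generated by $(a,b)$ down into $\mathfrak p$, one contradicts $(a,b)\notin \mathfrak p$. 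I expect this last step to be the main obstacle: unlike the ideal case, where one simply multiplies two elements outside $\mathfrak p$, here one must manipulate products and joins of $\tT$-congruences and argue that the resulting product of $(a,b)$-containing congruences reflects back to the single pair $(a,b)$. This forces one to use the precise behaviour of the product operation on $\tT$-congruences, the exact definition of $\sqrt{\Cong}$, and the distributivity lemmas from \cite{JuR1}, while keeping every step $\tT$-compatible (restricting attention to pairs of tangible elements, where the negation map and surpassing relation enter) — that $\tT$-bookkeeping is the genuinely new ingredient over the classical argument.
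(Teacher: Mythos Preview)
The survey paper does not supply its own proof of this statement; it merely records the result with a citation to \cite[Proposition~6.19]{JuR1}. So there is nothing in the present paper to compare your argument against, and any assessment has to be of your outline on its own terms.

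On those terms, your overall shape is the right one (radical $=$ intersection of primes via a Zorn argument), but the Zorn step as you have set it up does not close. You take $\mathfrak p$ maximal among $\tT$-congruences containing $\Cong$ with $(a,b)\notin\mathfrak p$. In the primeness check you then obtain that $\mathfrak p\vee\Phi_1$ and $\mathfrak p\vee\Phi_2$ both contain $(a,b)$, whence their product, and in particular $\langle(a,b)\rangle^2$, lies in $\mathfrak p$. But that only says $(a,b)\in\sqrt{\mathfrak p}$, which does \emph{not} contradict $(a,b)\notin\mathfrak p$; your attempt to ``reflect back to the single pair $(a,b)$'' cannot succeed from this hypothesis alone. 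This is exactly the place where the classical proof uses more than avoidance of a single element: one avoids the whole multiplicative system $\{f,f^2,\dots\}$, not just $f$.

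The fix is to run Zorn over the $\tT$-congruences $\Phi\supseteq\Cong$ with $(a,b)\notin\sqrt{\Phi}$, i.e., with $\langle(a,b)\rangle^n\not\subseteq\Phi$ for every $n\ge 1$. This poset is nonempty precisely because $(a,b)\notin\sqrt{\Cong}$, and it is still closed under chains. Maximality now gives, for each $i$, some $n_i$ with $\langle(a,b)\rangle^{n_i}\subseteq\mathfrak p\vee\Phi_i$, and then the product lands $\langle(a,b)\rangle^{n_1+n_2}$ in $\mathfrak p$, contradicting the strengthened avoidance condition. The ``distributivity'' inclusion $(\mathfrak p\vee\Phi_1)(\mathfrak p\vee\Phi_2)\subseteq\mathfrak p\vee\Phi_1\Phi_2$ you invoke is the genuine lemma to check in the congruence setting, and the $\tT$-compatibility bookkeeping you flag is real; but the conceptual gap is the choice of the Zorn family, not the algebra of products and joins.
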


   \medskip

 \begin{thm}[{\cite[Theorem 6.29]{JuR1}}]  Over a commutative prime triple $\mathcal A$,
any nonzero polynomial $f \in \tT[\la]$ of degree $n$ cannot have
$n+1$ distinct $\circ$-roots in $\tT$. \end{thm}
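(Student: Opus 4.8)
The plan is an induction on the degree $n$, running the classical argument (factor out a linear form at a root, then cancel) in the $\circ$-surpassing setting, with the primeness of $\mathcal A$ supplying the cancellation that ``absence of zero divisors'' supplies classically. It is convenient to prove the slightly stronger statement that a nonzero $f\in\mathcal A[\la]$ of degree $n$ whose leading coefficient lies in $\tT$ has at most $n$ distinct $\circ$-roots in $\tT$ (an $a\in\tT$ with $f(a)\in\mathcal A^\circ$); taking $f\in\tT[\la]$ is the case of interest. Throughout one uses that $\mathcal A^\circ$ is closed under $+$ and under $(-)$, and that $\tT\cap\mathcal A^\circ=\emptyset$. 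The base case $n=0$ is immediate: such an $f$ is a constant in $\tT$, which by the triple axioms is never in $\mathcal A^\circ$; note that this is exactly where ``tangible leading coefficient'' (rather than merely ``nonzero'') is needed, since a quasi-zero constant would have every element of $\tT$ as a $\circ$-root.

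The first ingredient I would establish is a \emph{factor theorem}: if $a\in\tT$ is a $\circ$-root of $f$ as above, then $f\preceq_\circ(\la(-)a)\,g$ for some $g\in\mathcal A[\la]$ of degree $n-1$ with the same leading coefficient. One runs the usual division algorithm, writing $g=c_n\la^{n-1}+g_{n-2}\la^{n-2}+\cdots+g_0$ with $g_{j-1}:=c_j+a\,g_j$ (and $g_{n-1}:=c_n$); then the degree-$n$ coefficient of $(\la(-)a)g$ is exactly $c_n$, while in each degree $1\le j\le n-1$ it is $c_j+(a\,g_j)^\circ$, which $\circ$-surpasses $c_j$. The only subtle coefficient is the constant one: there $(\la(-)a)g$ contributes $(-)a\,g_0$, and one must compare it with $c_0$ using the hypothesis $f(a)=c_0+a\,g_0\in\mathcal A^\circ$ together with the reversibility-type properties available in this setting (cf.~\cite[Theorem~5.43]{Row16}). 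This step is where the care is concentrated, because one cannot literally subtract: $c_n\la^n(-)c_n\la^n=(c_n\la^n)^\circ\neq\zero$.

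Given the factor theorem, the inductive step is short. Suppose $f$ has degree $n$, tangible leading coefficient, and $n+1$ distinct $\circ$-roots $a_1,\dots,a_{n+1}\in\tT$. Apply the factor theorem at $a_1$ to obtain $g$ of degree $n-1$ with the same (tangible, hence nonzero) leading coefficient and $f\preceq_\circ(\la(-)a_1)\,g$. Evaluating this polynomial $\circ$-surpassing at each $a_i$ with $i\ge2$ gives $f(a_i)\preceq_\circ(a_i(-)a_1)\,g(a_i)$; since $f(a_i)\in\mathcal A^\circ$ and adding a quasi-zero to a quasi-zero stays in $\mathcal A^\circ$, we get $(a_i(-)a_1)\,g(a_i)\in\mathcal A^\circ$. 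As $a_i\neq a_1$, meta-tangibility (with unique negation) forces $a_i(-)a_1\in\tT$, and then primeness of $\mathcal A$ lets us cancel this tangible factor to conclude $g(a_i)\in\mathcal A^\circ$. Thus $g$ is a nonzero polynomial of degree $n-1$ with tangible leading coefficient having the $n$ distinct $\circ$-roots $a_2,\dots,a_{n+1}$, contradicting the induction hypothesis.

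The main obstacle is the factor theorem itself — making polynomial division by $\la(-)a$ behave in the presence of quasi-zeros and getting the constant term right — together with verifying that the structural properties invoked (meta-tangibility, unique negation, reversibility, and a cancellation law ``$u\in\tT$ and $ub\in\mathcal A^\circ$ imply $b\in\mathcal A^\circ$'') are indeed available for a commutative prime triple, the last being the precise point at which primeness enters. An alternative route avoids the factor theorem altogether: form the $(n+1)\times(n+1)$ Vandermonde matrix $V=(a_i^{\,j})$ of the roots, note $V\mathbf c\in(\mathcal A^\circ)^{(n+1)}$ for the coefficient vector $\mathbf c$ of $f$, show $V$ is nonsingular via the identity $|V|=\prod_{i<k}(a_k(-)a_i)\in\tT$ (an analogue of the expansions behind \thmref{Lap}, the delicate point being that this identity survives the passage to $(-)$-determinants), and then invoke \cite[Corollary~7.4]{Row16} that submatrix rank is at most column rank: the submatrix rank of $V$ is $n+1$, so its columns are $\tT$-independent, contradicting the $\tT$-dependence exhibited by $V\mathbf c$.
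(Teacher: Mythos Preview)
The survey does not prove this statement --- it is quoted from \cite[Theorem~6.29]{JuR1} without argument --- so there is no paper proof to compare against. Your sketch follows the classical template and you honestly flag the pressure points, but two of them are genuine gaps rather than routine checks.

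First, you invoke meta-tangibility to obtain $a_i(-)a_1\in\tT$, yet the hypothesis is only ``commutative prime triple,'' and primeness (whether read congruence-theoretically as in \cite[\S6.2]{JuR1} or as ``$\mathcal A^\circ$ behaves like a prime ideal'') does not force differences of tangibles to be tangible. What the induction actually needs is weaker --- unique negation gives $a_i(-)a_1\notin\mathcal A^\circ$ when $a_i\ne a_1$, and then a cancellation law ``$u\notin\mathcal A^\circ,\ ub\in\mathcal A^\circ\Rightarrow b\in\mathcal A^\circ$'' suffices --- but you must establish that a prime triple is uniquely negated and that primeness delivers exactly that cancellation, rather than assuming meta-tangibility. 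The same issue bites the factor-theorem constant term: reversibility as stated in Definition~\ref{precedeq} requires both comparands to lie in $\tT$, and $ag_0$ need not be tangible, so the step ``$c_0+ag_0\in\mathcal A^\circ$ hence $c_0\preceq_\circ(-)ag_0$'' is not yet justified.

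Second, your Vandermonde alternative asserts $|V|=\prod_{i<k}(a_k(-)a_i)$, but this fails over a semiring with only a formal negation: already for three points the expanded product carries the surplus quasi-zero $(a_1a_2a_3)^\circ$ that classically cancels, so one obtains only $|V|\preceq_\circ\prod_{i<k}(a_k(-)a_i)$. That suffices to conclude $|V|\notin\mathcal A^\circ$ when the product is tangible, but ``nonsingular'' here means $|V|\in\tT$, which is what the submatrix-rank inequality (itself stated in \cite[Corollary~7.4]{Row16} only for meta-tangible triples) requires. So the Vandermonde route, as written, needs both a direct tangibility argument for $|V|$ and the same unproved meta-tangibility hypothesis.
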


  \begin{thm}[{\cite[Corollary 6.30]{JuR1}}]
 If $(\mathcal A,\tT,(-))$ is a  prime commutative
 triple with $\tT$ infinite, then so is  $(\mathcal A[\la],\tT,(-)).$
 \end{thm}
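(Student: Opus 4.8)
The plan is to derive the corollary directly from the root bound of the preceding theorem together with the hypothesis that $\tT$ is infinite. First I would dispose of the structural points. The polynomial triple $(\mathcal A[\la],\tT,(-))$ is commutative because $\mathcal A$ is; its tangible elements are the monomials $a\la^k$ with $a\in\tT$, and these generate $\mathcal A[\la]$ additively by construction since every polynomial is a finite sum of monomials; finally, no monomial $a\la^k$ lies in $\mathcal A[\la]^\circ$, because comparing coefficients of $\la^k$ in $a\la^k=h(-)h$ would force $a\in\mathcal A^\circ$, contradicting $\tT\cap\mathcal A^\circ=\emptyset$ in $\mathcal A$. So $(\mathcal A[\la],\tT,(-))$ is a commutative triple, and (together with the obvious fact that it is infinite, containing $\la,\la^2,\dots$) everything reduces to checking that it is prime.

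For primeness I would use the evaluation maps $e_\a\colon\mathcal A[\la]\to\mathcal A$, $\la\mapsto\a$, for $\a\in\tT$; in the commutative setting these are triple homomorphisms by the polynomial apparatus of \cite[\S4.7]{Row16}, so in particular $(fg)(\a)=f(\a)g(\a)$ and $(f(-)f)(\a)=f(\a)(-)f(\a)$. Unwinding the definition of a prime triple (which I would extract from the prime-system material of \cite[\S6.2]{JuR1}) it suffices to show that the product of two polynomials that are not quasi-zero is again not quasi-zero. Suppose instead that $f,g\notin\mathcal A[\la]^\circ$ but $fg=h(-)h\in\mathcal A[\la]^\circ$. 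Then for every $\a\in\tT$ we get $f(\a)g(\a)=h(\a)(-)h(\a)\in\mathcal A^\circ$, so by primeness of $\mathcal A$ each $\a$ is a $\circ$-root of $f$ or of $g$. Since $\tT$ is infinite, one of them, say $f$, has infinitely many distinct $\circ$-roots in $\tT$; but the preceding theorem forbids a nonzero polynomial of finite degree from having that many distinct $\circ$-roots, forcing $f$ to be a quasi-zero polynomial and yielding the contradiction. Hence $\mathcal A[\la]$ is prime.

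As a complementary sanity check — and the cleanest route when one only wants that $\mathcal A[\la]$ has no zero divisors — I would note that for $f,g\in\tT[\la]$ with tangible coefficients, the coefficient of the top power $\la^{\deg f+\deg g}$ in $fg$ is exactly the product of the two leading coefficients, which is tangible because $\tT$ is multiplicatively closed in a prime triple; hence that coefficient is neither $\zero$ nor a quasi-zero, so $fg\notin\mathcal A[\la]^\circ$. The main obstacle, and the step I would spend the most care on, is the passage in the second paragraph from the abstract statement ``$f$ has infinitely many $\circ$-roots in $\tT$'' to an actual application of the preceding theorem: that theorem is phrased for a single polynomial in $\tT[\la]$, so one must either read ``nonzero'' there as ``not a quasi-zero polynomial'' and apply it verbatim, or else, if it is literally ``$\neq\zero$'', reduce a general $f\in\mathcal A[\la]$ that is not a quasi-zero polynomial to a genuine nonzero tangible-coefficient polynomial that still inherits infinitely many $\circ$-roots from $f$ — which requires the finite-height structure of triples and a careful bookkeeping of how quasi-zeros among the coefficients interact with $\circ$-roots. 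A secondary point to pin down is the equivalence, in the commutative case, between the congruence-theoretic definition of a prime triple and the non-quasi-zero-product formulation used above.
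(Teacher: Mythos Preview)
Your approach is correct and is exactly the one the paper intends: the statement is presented here (and in \cite{JuR1}) as an immediate corollary of the preceding root-bound theorem, and your argument---evaluate at each $\a\in\tT$, use primeness of $\mathcal A$ to force one factor into $\mathcal A^\circ$ at each point, then invoke the bound on $\circ$-roots to get a contradiction from the infinitude of $\tT$---is the intended deduction. The two technical caveats you flag (that the root bound is literally stated for $f\in\tT[\la]$ rather than $\mathcal A[\la]$, and that the congruence-theoretic definition of ``prime'' must be matched with the elementwise criterion $xy\in\mathcal A^\circ\Rightarrow x\in\mathcal A^\circ$ or $y\in\mathcal A^\circ$) are genuine but are handled in \cite{JuR1} itself; this survey does not supply those details, so there is nothing further to compare.
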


As in \cite{Lor1,Lor2}, the emphasis on \cite{JuR1} is for $\tT$ to
be a cancellative multiplicative monoid (even a group), which
encompasses many major applications. This slights the Lie theory,
and indeed one could consider  Hopf systems. Motivation can be found
in~\cite{So}.

An issue that  must be confronted   is the proper definition of
morphism, cf.~\cite[Definitions 4.1, 4.3]{JuR1}. In categories
arising  from universal algebra, one's intuition would be to take
the homomorphisms, i.e., those maps which preserve equality in the
operators. We call these morphisms ``strict.'' However,  this
approach loses some major examples of hypergroups. Applications in
tropical mathematics and hypergroups (cf.~\cite[Definition 2.3]{Ju})
tend to depend on the ``surpassing relation'' $\preceq$
(\cite[Definition~2.65]{Row16}), so we are led to a broader
definition called \textbf{$\preceq$-category} in \cite[\S 4,
Definition~4.34]{JuR1}. $\preceq$-morphisms
    often provide the correct venue for
studying ground systems.
 On the other hand, \cite[Proposition~4.37ff.]{JuR1} gives a way of
 verifying that some morphisms automatically are strict.

%, and
%extra insight is obtained when we symmetrize Puiseux series at the
%algebraic level (\cite[\S 5.9]{JuR1}).

The situation is stricter for module systems  \cite[\S 5, 7,
8]{JuR1} over  ground triples. The sticky point here is that the
semigroups of morphisms $\Mor(A,B)$ in our $\tT$-module categories
are not necessarily groups, so the traditional notion of abelian
category has to be replaced by ``semi-abelian,'' \cite[Theorem
7.16]{JuR1}, and these  lack some of the fundamental properties of
abelian categories. The tensor product is only functorial when we
restrict our attention to the stricter definition of morphisms,
\cite[Proposition 5.8]{JuR1}!

\subsection{Module systems}\label{modsy}$ $

 In both cases, in the theory of semirings and their modules,
homomorphisms are described in terms of congruences, so congruences
should be a focus of the theory. The null congruences contain the
diagonal, and not necessarily zero, and lead us to null morphisms,
\cite[Definition 3.2]{JuR1}. An alternate way of viewing congruences
in terms of ``transitive'' modules of $\widehat{\mathcal M}$ is
given in  \cite[\S 8.4]{JuR1}. In \cite[\S 9]{JuR1}, ``Hom'' is
studied together with its dual, and again one only gets all the
desired categorical properties (such as the adjoint isomorphism
\cite[Lemma~9.8]{JuR1}) when considering strict morphisms. In this
way, the categories comprised of strict morphisms should be amenable
to a categorical view,  to be carried out in  \cite{JuMR2} for
homology, again at times with a Hopfian flavor.

The functors between the various categories arising in this theory
are described in \cite[\S 10]{JuR1}, also with an eye towards
valuations of triples.

%%
%\section{Contents of \cite{JuR2}: Geometry}\label{JuR22}
%
%
% The paper \cite{JuR2} focuses on module systems,
%leading to a geometric theory over ground systems, and is comprised
%of the following parts:
%
%\begin{itemize}
%\item Group and Lie semialgebra systems.
% \item The symmetrization functor from Lorscheid's blueprints to symmetrized triples.
%\item Localization theory for module triples over semiring triples.
%\item A geometrical category theory including a
%representation theorem for semi-abelian $N$-categories with
%negation.
%\item Schemes of semiringed spaces.
%\item Sheaves of module systems.
%\item A Hopf semialgebra approach to ground systems and module systems.
%
% \end{itemize}
%
%
%

 As in classical algebra, the ``prime'' systems
\cite[Definitions~2.11, 2.25]{Row16} play an important role in
affine geometry, via the  Zariski topology \cite[\S5.3.1]{JuR1}, so
it is significant that we have a version of the fundamental theorem
of algebra in \cite[Theorem~7.29]{JuR1}, which implies that a
polynomial system over a prime system is prime
 \cite[Corollary~7.30]{JuR1}.

\section{Contents of \cite{JuMR1}: Projective module systems}\label{JuR22}

 We start  with
 a $\preceq$ version of split epics (weaker than the classical
definition):

\begin{defn}\label{split8}
%An epic $\pi: \mathcal M\to \mathcal N$   \textbf{splits} if there
%is a monic $\nu:\mathcal  N \to \mathcal  M$ such that $  \pi \nu
%=\one_{\mathcal  N} $.
An epic $\pi: \mathcal M\to \mathcal N$   \textbf{$\preceq$-splits}
if there is an $N$-monic $\nu:\mathcal  N \to \mathcal  M$ such that
$ \one_{\mathcal  N} \preceq \pi \nu  $. In this case, we also say
that $(\pi ,\nu)$ \textbf{$\preceq$-splits}, and $\mathcal N$ is a
$\preceq$-\textbf{retract} of $\mathcal M$.

 A module system
$\mathcal M = (\mathcal M, \tT_{\mathcal M}, (-),\preceq) $ is the
$\tT$-\textbf{$\preceq$-direct sum} of subsystems $(\mathcal M_1,
\tT_{\mathcal M_1}, (-),\preceq)$ and~$(\mathcal M_2, \tT_{\mathcal
M_2}, (-),\preceq)$ if $\tT_{\mathcal M_1} \cap \tT_{\mathcal M_2} =
\emptyset$ and every $a \in \mathcal M$ can be written   $a \preceq
a_1 + a_2$ for $a_i \in \mathcal M_i$.
\end{defn}

This leads to projective module systems.

\begin{defn} %A congruence $\Phi$ is  \textbf{free} if it is isomorphic
%to $\hat R^{(I)}$ for some index set $I$.

  A $\tT$-module system $\mathcal P$ is   \textbf{projective} if for any strict epic  $h: \mathcal M \to \mathcal
  M'$ of $\tT$-module systems,
 every  morphism $ f: \mathcal P \to \mathcal M'$ lifts to a
 morphism $\tilde f: \mathcal P \to\mathcal M$,
 in the sense that $h\tilde f =  f.$
%
% A $\tT$-module system $\mathcal P$ is  $\tT$-\textbf{projective} if for any ($\tT$-module system) strict  $\tT$-epic  $h: \mathcal M \to \mathcal M',$
% every  morphism $\hat f: \mathcal P \to \mathcal M'$ lifts to a
% morphism $\tilde f: \mathcal P \to\mathcal M$,
% in the sense that $h\tilde f =\hat  f.$

 $\mathcal P$ is  $\preceq$-\textbf{projective} if for any ($\tT$-module system) strict epic  $h: \mathcal M \to \mathcal M',$
 every morphism $ f: \mathcal P \to \mathcal M'$ $\preceq$-lifts to a
 morphism $\tilde f: \mathcal P \to \mathcal M$,
 in the sense that $f \preceq h\tilde f  .$
\end{defn}

Their fundamental properties are then obtained, including the
$\preceq$-Dual Basis Lemma \cite[Proposition~5.14]{JuMR1}, leading
to $\preceq$-projective
 resolutions and
 $\preceq$-projective
 dimension.

Building on projective modules, homology is work in progress
\cite{JuMR2}, as is parallel work in geometry. One obtains a
homology theory in the context of homological categories
\cite{grandis2013homological} and derived functors, in connection to
the recent work of Connes and Consani \cite{CC2}.

\section{Interface between systems and tropical mathematics}\label{comp}

We conclude by relating systems to other approaches taken in
tropical mathematics, \cite{IMS,MaclaganSturmfels,Ber0,Ber}.

\subsection{Tropical versus supertropical}\label{supert1}$ $

First we consider briefly some of the basic tools in affine tropical
geometry, to see how they relate to the supertropical setting.

\subsubsection{The ``standard'' tropical approach}$ $

 One often works in the polynomial semiring $\mathbb
R_{\operatorname{max}}[\Lambda]$, although  here we replace $\mathbb
R_{\operatorname{max}}$ by any  ordered semigroup $(\Gamma,\cdot)$,
with $\Gz : = \Gamma \cup \{ \zero \}$ where $\zero a = \zero$ for
all $a \in \Gamma$.   For $\mathbf i = (i_1, \dots, i_n)\in \mathbb
N^{(n)}$, we write $\Lambda ^\mathbf i $ for $ \la_1^{i_1}\cdots
\la_n^{i_n}.$ A tropical hypersurface of a tropical polynomial $f =
\sum _{\mathbf i } \a_{\mathbf i} \Lambda ^\mathbf i  \in
\Gamma[\Lambda]$ is defined as the set of points in which two
monomials take on the same dominant value, which is the same thing
as the supertropical value of $f$ being a ghost.

\begin{defn}[{\cite[Definition~5.1.1]{GG}}] Given a   polynomial $f =
\sum \a_{\mathbf i}\Lambda ^\mathbf i ,$  define  $\supp( f )$ to be
all the tuples $\mathbf i =({i_1}\cdots  {i_n})$  for which the
monomial in $f$ has nonzero coefficient $\a_{\mathbf i}$, and for
any such monomial~$h,$ write $f_{\hat h}$ for the polynomial
obtained from deleting $h$ from the summation.

 The \textbf{bend relation} of $f$ (with respect to a tropical hypersurface $V$) is generated by all $$\{f  \equiv_{\operatorname{bend}} f_{\hat
 h=  \a_{\mathbf i}\Lambda ^\mathbf i }:\
\mathbf i\in  \supp( f )\}.$$
\end{defn}

The point of this definition is that the variety $V$ defined by a
tropical polynomial is defined by two monomials (not necessarily the
same throughout) taking equal dominant values at each point of $V$,
and then the bend relation reflects the equality of these values on
$V$, thence the relation.

\subsubsection{Tropicalization and tropical ideals}\label{trop1}$ $

Finally, one needs to relate tropical algebra to Puiseux series via
%\ell}^{\infty} c_k t^{k/N}$
the following tropicalization map.

\begin{defn} For  any additive group $\mathcal M$, one can define the group
$\mathcal M\{\{t\}\} $ of Puiseux series on the variable $t$, which
is the set of formal series of the form $ p = \sum_{k =
\ell}^{\infty} c_k t^{k/N}$ where $N \in \mathbb{N}$, $\ell \in
\mathbb{Z}$, and $c_k \in \mathcal A$.\end{defn}

Customarily, $\mathcal M = \mathbb C$.

\begin{rem} If $\mathcal{M}$ is an algebra, then $
\mathcal{M}\{\{t\}\} $ is also an algebra under the usual
convolution product.
\end{rem}

\begin{defn} One has the \textbf{Puiseux
valuation} $val : \widehat{\mathcal{M}\{\{t\}\} } \setminus \{0\}
\rightarrow \Gamma$ defined by
\begin{equation}
val(p) = \min_{c_k \neq 0}\{k/N\},
\end{equation}
 sending a Puiseux series to its value under the Puiseux valuation
$v$. This induces a map $$\tropa: \mathcal{M}\{\{t\}\}[\Lambda] \to
\Gamma[\Lambda] \},$$ called \textbf{tropicalization}, sending
$p(\la_1 \cdots \l_n) : = \sum p_{\mathbf i}\la_1 \cdots \l_n$ to
$\sum \val(p_{\mathbf i})\la_1 \cdots \l_n$.

Suppose $I \triangleleft  \mathcal{M}\{\{t\}\}[\Lambda].$ The bend
congruence on $\{ \tropa(f): f \in I \}$ is denoted as $\Trop(I).$
\end{defn}

 Suppose $\mathcal{M}$
is a field. We can \textbf{normalize} a Puiseux series $p= \sum
p_{\mathbf i}\la_1^{i_1} \cdots \la_n^{i_n}$ at any given $\mathbf i
\in \supp( p )$ by dividing through by $p_{\mathbf i}$; then the
normalized coefficient is $\one$. Given two Puiseux series $p= \sum
p_{\mathbf i}\la_1^{i_1} \cdots \la_n^{i_n}$, $q = \sum q_{\mathbf
i}\la_1 ^{i_1} \cdots \la_n^{i_n}$ having a common monomial
$\Lambda^{\mathbf i} = \la_1 ^{i_1} \cdots \la_n^{i_n}$ in their
support, one can normalize both and assume that $ p_{\mathbf i} =
q_{\mathbf i} = \one,$ and
 remove this
monomial from their difference $p-q$, i.e., the coefficient of
$\Lambda^{\mathbf i}$ in $\val(p-q)$ is $\zero ( = - \infty  ).$

Accordingly, a  \textbf{tropical ideal} of $\Gamma[\Lambda]$ is an
ideal $\mathcal I$ such that for any two polynomials $f= \sum
f_{\mathbf i}\la_1^{i_1} \cdots \la_n^{i_n}$, $g= \sum f_{\mathbf
i}\la_1^{i_1} \cdots \la_n^{i_n} \in \mathcal I$ having a common
monomial $\Lambda^{\mathbf j}$   there is $h =  \sum h_{\mathbf
i}\la_1^{i_1} \cdots \la_n^{i_n}\in \mathcal I$ whose coefficient of
$\Lambda^{\mathbf j}$ is $\zero,$ for which $$ h_{\mathbf i} \ge
\min \{a_f f _{\mathbf i}, b_g g_{\mathbf i}\}, \, \forall {\mathbf
i} $$ for suitable $a_f,b_g \in \mathcal{M}.$

For any tropical ideal $\mathcal I$, the sets of minimal indices of
supports constitutes the set of circuits of a matroid. This can be
formulated in terms of valuated matroids, defined in
\cite[Definition~1.1]{DW0} as follows:

  A
\textbf{valuated matroid} of \textbf{rank} $m$ is a pair $(E,v)$
where  $E$ is a set and $v: E^{(m)} \to \Gamma$   is a   map
satisfying the following properties:

\begin{enumerate}\eroman\item    There exist $e_1, \dots, e_m \in E$ with $v(e_1, \dots, e_m) \ne 0.$
 \item
 $v(e_1, \dots, e_m)= v(e_{\pi(1)}, \dots, e_{\pi(m)})$. for each $e_1, \dots, e_m \in E$ and every
permutation $\pi$. Furthermore,  $v(e_1, \dots, e_m)=\zero$ in case
some $e_i = e_j$.  \item For $(e_0, \dots, e_m,  e_2', \dots, e_m'
\in E$ there exists some $i$ with $1 \le i \le m$ and $$v(e_1,
\dots, e_m)v(e_0,  e_2', \dots, e_m')\le v(e_0, \dots,
e_{i-1},e_{i+1}, \dots, e_m)v(e_i,  e_2', \dots, e_m').$$
\end{enumerate}

This information is encapsulated in the following result.

 \cite[Theorem 1.1]{MacR} Let $K$ be a field with a valuation $\val : K \to \Gz$, and let
 $Y$
be a closed subvariety of~${K^\times}^{(n)}$ defined by an ideal
$\mathcal I \triangleleft K[\la _1^{\pm 1}  , \dots,\la _n^{\pm 1}
].$ Then any of the following three objects determines the others:
\begin{enumerate}\eroman\item The congruence $\Trop(\mathcal I)$ on the semiring
$\mathcal S := \Gamma[\la _1^{\pm 1}  , \dots,\la _n^{\pm 1}  ]$ of
tropical Laurent polynomials; \item The ideal $\tropa(\mathcal I)$
in $  S$; \item  The set of valuated matroids of the vector spaces
$\mathcal I_h ^ d ,$ where $\mathcal I_h ^ d $ is the degree $d$
part of  the homogenization of the tropical ideal $\mathcal I$.
 \end{enumerate}

\subsubsection{The supertropical approach}$ $

In supertropical mathematics the definitions run somewhat more
smoothly. $V$ was defined in terms of $\tT^\circ$ so for $f,g \in
\tT [\Lambda]$ we define the \textbf{$\circ$-equivalence} $f
\equiv_\circ g$ on $\tT [\Lambda]$ if and only if $f^\circ = g^\circ
$, i.e. $f(a)^\circ = g(a)^\circ$ for each $a \in \tT .$

 \begin{prop}\label{epicspl} The bend relation is the same as the $\circ$-equivalence,
 in the sense that $f \ \equiv_{\operatorname{bend}}\
 g$ iff $f \equiv_\circ g$, for any polynomials in $f,g \in \tT [\Lambda]$.
 \end{prop}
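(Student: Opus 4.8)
The plan is to prove the two inclusions $\equiv_{\operatorname{bend}}\,\subseteq\,\equiv_\circ$ and $\equiv_\circ\,\subseteq\,\equiv_{\operatorname{bend}}$ separately. First I would record that $\equiv_\circ$ is a congruence on $\tT[\Lambda]$: the operator $p\mapsto p^\circ$ respects addition, and respects multiplication since $(pq)^\circ=(pq)e=(pe)(qe)=p^\circ q^\circ$ by~\eqref{emul} and the supertropical identity $e^2=e$; hence $f\equiv_\circ g\iff f^\circ=g^\circ$ is a semiring congruence. I will also assume the supertropical semiring is standard, i.e.\ $\nu|_\tT$ is injective; without this the monomials $\alpha\la$ and $\alpha'\la$ with $\nu(\alpha)=\nu(\alpha')$ already falsify the statement, since the only bend move issuing from a monomial deletes it and lands on $\zero$.

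For $\equiv_{\operatorname{bend}}\,\subseteq\,\equiv_\circ$ it suffices, $\equiv_\circ$ being a congruence, to check that each generating bend relation $f\equiv_{\operatorname{bend}}f_{\hat h}$ lies in $\equiv_\circ$. Writing $f=f_{\hat h}+h$ with $h=\alpha_{\mathbf i}\Lambda^{\mathbf i}$, we have $f^\circ=(f_{\hat h})^\circ+h^\circ$. The bend moves in force delete only monomials $h$ that are redundant for the hypersurface, i.e.\ with $\nu(h(a))\le\nu(f_{\hat h}(a))$ for all $a$; then $h^\circ(a)=\nu(h(a))\le\nu(f_{\hat h}(a))=(f_{\hat h})^\circ(a)$, so by $\nu$-bipotence (strict case) or supertropicality (equality case) $(f_{\hat h})^\circ(a)+h^\circ(a)=\nu(f_{\hat h}(a))$. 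Hence $f^\circ=(f_{\hat h})^\circ$, i.e.\ $f\equiv_\circ f_{\hat h}$.

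For the converse, assume $f^\circ=g^\circ$. I would reduce $f$ to a normal form $\widehat f$ by deleting its non-essential monomials one at a time, a monomial of $f$ being non-essential when it never strictly dominates the remaining terms of $f$; equivalently, when the point $(\mathbf i,\nu(\alpha_{\mathbf i}))$ lies in the lower convex hull of the others. Each such deletion is a bend move of the kind admitted above (the deleted monomial being pointwise dominated by $f_{\hat h}$), it leaves $f^\circ$ unchanged, and---crucially---deleting a non-essential monomial never promotes a remaining one to essential, so the procedure terminates at a $\widehat f$ supported exactly on the essential monomials. Standardness then lets one read off each essential coefficient from the piecewise-linear function $f^\circ$ on the region where its monomial dominates, so $\widehat f$ depends only on $f^\circ$. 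Applying the same to $g$ and invoking $f^\circ=g^\circ$ yields $\widehat f=\widehat g$, so $f\equiv_{\operatorname{bend}}\widehat f=\widehat g\equiv_{\operatorname{bend}}g$.

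The hard part will be this normal-form step: it rests on the classical fact that a tropical polynomial function admits a canonical minimal presentation---its essential monomials being the vertices of the appropriate convex hull---and that this presentation is reached by successively discarding redundant monomials, the delicate point (which is where the global convexity enters) being that discarding one redundant monomial cannot render another one essential. Once one fixes precisely which deletions count as bend relations, the inclusion $\equiv_{\operatorname{bend}}\,\subseteq\,\equiv_\circ$ is the short computation above.
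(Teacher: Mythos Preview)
Your forward direction matches the paper's. For the converse, your route is correct but genuinely different from the paper's, and considerably heavier. The paper does not pass to a canonical normal form at all: given $f^\circ=g^\circ$ with $f=\sum f_i$, $g=\sum g_j$, it simply walks from $f$ to $g$ through $f+g$ by the chain
\[
f\ \equiv_{\operatorname{bend}}\ g_1+f\ \equiv_{\operatorname{bend}}\ g_1+g_2+f\ \equiv_{\operatorname{bend}}\cdots\equiv_{\operatorname{bend}}\ g+f\ \equiv_{\operatorname{bend}}\ g+f_{\hat f_1}\ \equiv_{\operatorname{bend}}\cdots\equiv_{\operatorname{bend}}\ g,
\]
each step being a single bend move: one may add $g_j$ to the running sum because $g_j^\circ\le g^\circ=f^\circ$ forces $g_j$ to be redundant there, and symmetrically one may then strip off each $f_i$ since $f_i^\circ\le f^\circ=g^\circ$. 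This bypasses the convex-geometry ``hard part'' you flag (uniqueness of the essential-monomial presentation, and stability of essentiality under deletions) entirely. Your argument does buy something the paper's does not --- a canonical representative $\widehat f$ in each class --- but for the bare equivalence of the two congruences the $f+g$ bridge is both shorter and avoids the standardness hypothesis doing real work in the reconstruction step.
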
  \begin{proof} The bend relation is obtained from a
 sequence of steps, each removing or adding on a monomial which
 takes on the same value of some polynomial $f$ on $V$. Thus the
 defining relations of the bend relation are all $\circ$ relations.
 Conversely, given a $\circ$ relation $f^\circ = g^\circ $, where
 $f = \sum f_i$ and $g = \sum g_j$ for monomials $f_i, g_j$,
 we have $$f\ \equiv_{\operatorname{bend}}\  g_1 + f \ \equiv_{\operatorname{bend}}\  g_1 + g_2 + f
 \ \equiv_{\operatorname{bend}}\  \dots \ \equiv_{\operatorname{bend}}\  g  + f \ \equiv_{\operatorname{bend}}\  g
 +  f_{\hat
 h} \ \equiv_{\operatorname{bend}}\  \dots \ \equiv_{\operatorname{bend}}\  g, $$
 implying $f \ \equiv_{\operatorname{bend}}\
 g$. (The same sort of argument is given in the proof of
 \cite[Proposition~2.6]{PR}.)

  \end{proof}

In supertropical algebra, given a   polynomial $f = \sum \a_{\mathbf
i}\Lambda ^\mathbf i ,$  define  $\vsupp( f )$ to be all the tuples
$\mathbf i =({i_1}\cdots  {i_n})$  for which the monomial in $f$ has
 coefficient $\a_{\mathbf i}\in \tT$.

 The supertropical version of tropical
ideal is that if $f,g \in \mathcal I$ and $\mathbf i \in \vsupp( f )
\cap \vsupp( g ),$ then, by normalizing, there are $a_f, b_g$ such
that  $\mathbf i \notin \vsupp(a_f f + b_g g)$. This is somewhat
stronger than the claim of the previous paragraph, since it
specifies the desired element.

Supertropical ``$d$-bases'' over a super-semifield are treated in
\cite{IKR}, where vectors are defined to be independent iff no
tangible linear combination is a ghost. If $V$ is defined as the set
 $v \in F^{(n)}: f_j(v) \in \nu(F),\ \forall j \in J$ for a set $\{ f_j : j \in J\}$ of homogeneous polynomials of  degree
 $m$, then taking $\mathcal I = \{ f: f(V) \in \nu(F)\}$ and  $\mathcal
 I_m$ to be its polynomials of degree $m$, one sees that the
$d$-bases of $I_m$ of cardinality $m$  comprise a matroid (whose
circuits are those polynomials of minimal support), by
\cite[Lemma~4.10]{IKR}. On the other hand, submodules of free
modules can fail to satisfy Steinitz' exchange property
(\cite[Examples~4.18,4.9]{IKR}), so there is room for considerable
further investigation.

``Supertropicalization'' then is the same tropicalization map as
$\tropa$, now taken to the standard supertropical semifield $\strop:
\Gamma \cup \tGz$ (where $\tT = \Gamma$). In view of
Proposition~\ref{epicspl}, the analogous proof of \cite[Theorem
1.1]{MacR} yields the corresponding result:

\begin{thm} Let $K$ be a field with a valuation $\val : K \to \Gz$, and
let
 $Y$
be a closed subvariety of  ${K^\times}^{(n)}$ defined by an ideal $
I \triangleleft K[\la _1^{\pm 1}  , \dots,\la _n^{\pm 1}  ].$ Then
any of the following  objects determines the others:
\begin{enumerate}\eroman\item The congruence given by $\circ$-equivalence on $\mathcal
I = \strop(I)$ in the supertropical \semiring0 $\mathcal S :=
\Gamma[\la _1^{\pm 1} , \dots,\la _n^{\pm 1}  ]$ of tropical Laurent
polynomials; \item The ideal $\tropa(\mathcal I)$ in $\mathcal  S$;
\item  The set of valuated matroids of the vector spaces $\mathcal
I_h ^ d ,$ where $\mathcal I_h ^ d $ is the degree $d$ part of  the
homogenization of the tropical ideal $\mathcal I$.
 \end{enumerate}
 \end{thm}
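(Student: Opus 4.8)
The plan is to transport the proof of \cite[Theorem~1.1]{MacR} to the supertropical setting essentially word for word, with Proposition~\ref{epicspl} carrying out the translation: wherever the \cite{MacR} argument invokes the bend congruence we substitute the $\circ$-equivalence, and Proposition~\ref{epicspl} guarantees these two relations coincide. The one preliminary to check is that Proposition~\ref{epicspl}, stated over $\tT[\Lambda]$, extends to the semiring $\mathcal S=\Gamma[\la_1^{\pm 1},\dots,\la_n^{\pm 1}]$ of tropical Laurent polynomials; this is immediate, since each defining step of the bend congruence merely deletes or restores a single monomial of a polynomial and that step is unaffected by allowing negative exponents. With this in hand we prove the theorem as a cycle of implications, say (i)$\Rightarrow$(iii)$\Rightarrow$(ii)$\Rightarrow$(i), so that each object determines the other two.

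For (ii)$\Leftrightarrow$(i): from the ideal in (ii) one builds the congruence in (i) exactly as the bend congruence is built from a tropical ideal, namely as the congruence generated by the relations $f\equiv_\circ f_{\hat h}$ obtained by deleting one monomial $h$, for $f$ ranging over the ideal; Proposition~\ref{epicspl} identifies this generated congruence with full $\circ$-equivalence on $\mathcal I=\strop(I)$. Conversely one recovers the ideal of (ii) from the $\circ$-congruence by the same reading as in \cite{MacR}; here the operative fact is the supertropical $\nu$-support property recalled just before the statement — for $f,g\in\mathcal I$ sharing a monomial $\Lambda^{\mathbf j}$ there are $a_f,b_g$ with $\mathbf j$ not in the $\nu$-support of $a_f f+b_g g$ — which is precisely the form in which the ideal axiom gets used.

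For the matroid direction one restricts to the degree-$d$ homogeneous part $\mathcal I_h^d$ and quotes \cite[Lemma~4.10]{IKR}: the $d$-bases of $\mathcal I_h^d$ of cardinality $d$ form a matroid whose circuits are the $\nu$-supports of the polynomials of minimal support in $\mathcal I_h^d$, and the valuation refines this to a valuated matroid in the sense of \cite[Definition~1.1]{DW0}. From the $\circ$-congruence one reads off, for each $d$, these minimal supports together with the relative values of the coefficients supported on them, giving (i)$\Rightarrow$(iii); conversely, assembling the valuated matroids of all graded pieces reconstructs, coefficient by coefficient and up to the common normalization, the elements of the ideal, giving (iii)$\Rightarrow$(ii).

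The main obstacle is the normalization bookkeeping that is already the crux of \cite{MacR}: a valuated matroid of $\mathcal I_h^d$ remembers each coefficient only up to a common rescaling along each circuit, so one must verify that this is exactly the ambiguity present in the ideal and in the $\circ$-congruence, and in particular that the slightly \emph{stronger} supertropical $\nu$-support formulation — which names the witness $a_f f+b_g g$ rather than merely asserting its existence — does not secretly encode more than the matroid data can return. Over the supertropical semifield the witness is forced up to scaling once the $\circ$-equivalence is known, exactly as the bend relation forces it in \cite{MacR}, so the three objects carry the same information and the cycle closes.
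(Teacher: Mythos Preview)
Your proposal is correct and follows exactly the approach the paper takes: the paper's entire argument is the single sentence ``In view of Proposition~\ref{epicspl}, the analogous proof of \cite[Theorem 1.1]{MacR} yields the corresponding result,'' and your write-up is simply a fleshed-out version of what that transport entails. If anything, you supply more detail than the paper does (the Laurent extension, the cycle of implications, the normalization caveat), but the underlying idea --- use Proposition~\ref{epicspl} to replace the bend congruence by $\circ$-equivalence and then quote \cite{MacR} verbatim --- is identical.
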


\subsubsection{The systemic approach}$ $

The supertropical approach can be generalized directly to the
systemic approach, which also includes hyperfields and fuzzy rings.
We assume $(\mathcal A, \tT, (-), \preceq)$ is a system.

\begin{defn}\label{sys1}
The \textbf{$\circ$-equivalence} on $\Fun (S, \mathcal A)$ is
defined by,  $f \equiv_\circ g$ if and only if $f^\circ = g^\circ $,
i.e. $f(s)(b)^\circ = g(s)(b)^\circ$ for each $s \in S$ and $b \in
\mathcal A.$
\end{defn}

This matches the supertropical definition.

\begin{defn}\label{sys2}
Given  $f \in \Fun (S, \mathcal A)$ define $\circ$-$\supp( f )= \{ s
\in S: f(s) \in \tT\}$.
\end{defn}

 The systemic version of tropical
ideal is that if $f,g \in \mathcal I$ and $s \in \circ$-$\supp( f
)\, \cap\, \circ$-$\supp( g),$ then there are $a_f, b_g \in \tT$
such that
  $s \notin \circ$-$\supp (a_f  f (-) b_g   g)$.

Now one can view tropicalization as a functor as in \cite[\S
8]{Row16}.

\section{Areas for further research}

\subsection{Geometry}

A \textbf{$\preceq$-root} of a polynomial $f \in \mathcal A [\la_1,
\dots, \la_n]$ is some $n-$-tuple $(a_1, \dots, a_n)$ such that
$f(a_1, \dots, a_n) \succeq \zero.$ This leads naturally to affine
$\preceq$-varieties (as common $\preceq$-roots of a set of
polynomials), and algebraic geometry. An alternative approach is
through Hopf semi-algebras.

\subsection{Valuated  matroids over systems}
Viewing tropicalization as a functor, define the appropriate
valuated matroid. Then one can address the recent work on matroids
and valuated matroids, and formulate them over systems in analogy to
\cite{DW}. Presumably, as in \cite{AGR},   in the presence of
various assumptions, one could carry out the proofs of many of these
assertions.

\end{document}